\DeclareFontShape{OMX}{cmex}{m}{n}{
  <-7.5> cmex7
  <7.5-8.5> cmex8
  <8.5-9.5> cmex9
  <9.5-> cmex10
}{}
\newtheorem{remark}[theorem]{Remark}
\newcommand{\laplace}{\Delta}
\newcommand{\usc}{u_{\mathrm{sc}}}
\newcommand{\ui}{u_{\mathrm{i}}}
\DeclareMathOperator{\kernel}{\mathcal{N}}
\DeclareMathOperator{\ran}{\mathcal{R}}
\DeclareMathOperator{\Real}{Re}
\DeclareMathOperator{\op}{op}
\DeclareMathOperator{\N}{\mathbb{N}}
\DeclareMathOperator{\R}{\mathbb{R}}
\DeclareMathOperator{\C}{\mathbb{C}}
\DeclareMathOperator{\DT}{\mathcal{D}}
\DeclareMathOperator{\Li}{\mathcal{L}}
\DeclareMathOperator{\Is}{\mathcal{L}_{\operatorname{is}}}
\DeclareMathOperator{\inv}{\operatorname{inv}}
\author{Heiko Hoffmann\thanks{Department of Mathematics, Siegen University, Walter-Flex-Str. 3, 57072 Siegen, Germany ({\tt heiko.hoffmann@uni-siegen.de})} \and Anne Wald\thanks{Department of Mathematics, Saarland University, PO Box 15 11 50, 66041 Saarbr\"ucken, Germany ({\tt anne.wald@num.uni-sb.de}).}}
\title{On parameter identification problems for elliptic boundary value problems in divergence form \\ Part I: An abstract framework}
\begin{document}

\maketitle

\begin{abstract}
 Parameter identification problems for partial differential equations are an important subclass of inverse problems. The parameter-to-state map, which maps the parameter of interest to the respective solution of the PDE or state of the system, plays the central role in the (usually nonlinear) forward operator.
Consequently, one is interested in well-definedness and further analytic properties such as continuity and differentiability of this operator w.r.t. the parameter in order to make sure that techniques from inverse problems theory may be successfully applied to solve the inverse problem. In this work, we present a general functional analytic framework suited for the study of a huge class of parameter identification problems including a variety of elliptic boundary value problems (in divergence form) with Dirichlet, Neumann, Robin or mixed boundary conditions. In particular, we show that the corresponding parameter-to-state operators fulfil, under suitable conditions, the tangential cone condition, which is often postulated for numerical solution techniques. This framework particularly covers the inverse medium problem and an inverse problem that arises in terahertz tomography.
\end{abstract}

\begin{keywords} 
inverse problems, parameter identification, inverse scattering, form methods, existence and uniqueness of weak solutions, Fr\'echet differentiability, tangential cone condition
\end{keywords}
\begin{AMS}
35J25; 46N40; 65J15; 65J22; 65N21; 78A46 
\end{AMS}

\section{Introduction and Motivation}
Many inverse problems that arise in the natural sciences are based on a physical model that is formulated as a partial differential equation, or rather a boundary or initial value problem. Applications are, for example, photoacoustic tomography (PAT) \cite{arridge16, wang09}, electrical impedance tomography (EIT) \cite{borcea02, uhlmann09}, ultrasound imaging \cite{ck1}, and various examples in nondestructive testing \cite{alifanov94, tanaka98}. \\
Inverse problems are commonly formulated using operator equations
\begin{displaymath}
 F(\theta) = g, \quad F: \mathcal{D}(F) \subseteq X \rightarrow Y,
\end{displaymath}
where $F$ is called the \emph{forward operator} and $X$ and $Y$ are suitable function spaces. 
In parameter identification the forward operator is expressed as the composition $F = Q \circ S$ of a parameter-to-state map $S$ and an observation operator $Q$. The operator $S$ maps the parameter of interest to the (weak) solution $u_{\theta} = S(\theta)$ of the respective boundary value problem, whereas the observation operator $Q$ describes the measuring process, i.e., the generation of the data $y = Q(u_{\theta})$ from the state $u_{\theta}$. \\
In this article, we address parameter-to-state operators, which often turn out to be nonlinear operators.
In general, the first step of a mathematical analysis of parameter identification problems is to show well-definedness as well as continuity and differentiability properties of the forward operator, particularly of the parameter-to-state map. The latter properties are required for many regularisation techniques that are used to find a stable solution of the usually ill-posed parameter identification problems. Examples are the classical Landweber method \cite{hns_cl}, Tikhonov regularisation \cite{ekn89}, Gauss-Newton methods \cite{bk97, qi00}, or sequential subspace optimisation techniques \cite{ws17, aw18}.
An overview of suitable techniques can be found in \cite{benning_burger_2018, ehn96, kns_itreg, skhk12}.

We derive a general framework that allows the treatment of a certain class of parameter-to-state operators that are linked to elliptic boundary value problems. To this end, we consider the variational formulation of the underlying boundary value problem, i.e., we are interested in weak solutions. In order to establish the well-definedness of the parameter-to-state operator, we have to show the existence and uniqueness of a solution of the respective variational problem. Similar framework, particularly suited for a wide class of time-dependent parameter identification problems, have been published in \cite{kaltenbacher17, akar16}. 

\vspace*{2ex}

The framework that is derived in this work is inspired by the analysis of the so-called \emph{scattering operator} as it occurs in inverse scattering problems such as the inverse medium problem, see, e.g., \cite{gbyc, gbpl,gbpl07}, and an inverse problem from terahertz (THz) tomography \cite{awts18}. In these examples, an object is illuminated by electromagnetic radiation $\ui$ at fixed frequencies $k_0 > 0$. The properties of the object, encoded in a material parameter $m$, lead to refraction, reflection and, in the case of THz tomography, absorption of the radiation $u$, which is the superposition $u= \ui + \usc$ of a given incident wave $\ui$ and the scattered wave $\usc$. The latter is the solution of the boundary value problem
\begin{align}
 \laplace \usc + k_0^2 (1-m) \usc &= k_0^2 m \ui && \text{in }\Omega, \label{hhg}\\ 
 \partial_{\nu} \usc - ik_0 \usc &= 0 && \text{on }\partial\Omega \label{rbc}
\end{align}
with Robin boundary conditions. The scattering operator is the parameter-to-state map $S: m \mapsto u := \ui + \usc$, i.e., it maps the material parameter $m$ to the resulting wave field $u$. More precisely, $\usc$ is the weak solution of this Helmholtz equation. Finally, the radiation is typically measured on a suitable curve around the object, determined by the domain $\Omega$. The inverse problem now consists in reconstructing $m$ from these measurements. Note that $m$ is real-valued in the inverse medium problem and complex-valued in THz tomography.\\
The respective variational problem is expressed, using a sesquilinear form $a$ and a functional $b$, via
\begin{displaymath}
 a(\usc,v) = b(v)
\end{displaymath}
for all suitable test functions $v$, and we are interested in a unique weak solution $\usc$. The Lax-Milgram lemma yields the desired result, if $a$ is a coercive and bounded sesquilinear form and $b$ is a bounded linear functional. However, this does not hold in general for the variational problems considered in the afore-mentioned context. 

\vspace*{2ex}

In this work, we are concerned with a more general framework, covering a wider range of boundary value problems resp. corresponding variational problems that arise from elliptic partial differential equations and include the scattering problems related to THz tomography or the inverse medium problem.
As we shall see, by using functional analytic tools such as a Riesz-type representation theorem and the Fredholm alternative, one can prove the existence of a unique weak solution, if the domain of the forward operator is defined on a set of certain admissible parameters.
 
Concerning the applications to elliptic boundary value problems in an upcoming paper,
we shall make use of the form methods introduced by Kato, see \cite{kato}, and Lions \cite{jll57}, which have been employed and hugely extended in various recent works by Arendt, ter Elst and others, see, e.g., \cite{at2012, at2012_2} and which have been applied in other relevant applications such as in \cite{watr09}. An overview of the functional analytic background, in particular in the complex-valued setting, can be found in \cite{sscs_en}.

The paper is organised as follows.
In the next section we specify the setting, i.e., we introduce the spaces that are involved as well as the properties of the considered forms. Within this general framework, we find, in Section \ref{operator theoretic formulation}, an operator theoretic reformulation of the problems we are interested in and prove, based on this, existence and uniqueness of a weak solution in Section \ref{section_ex_uni}. Following this, we illuminate the relation between our approach and the form methods mentioned above. Afterwards, we study the analytic properties of certain parameter-to-state operators in Section \ref{inverse problem}.
In the final section, we give a summary and outlook.

\section{Preliminaries} \label{section_prelim}
In this short section we fix the notation, collect some well-known facts, and introduce the abstract framework we shall work within.
\\
In what follows we consider vector spaces over $\mathbb K\in\{\R,\C\}$.
Let $(E,\tau_E)$ be a topological space, $(W,\|\cdot\|_W)$ a nontrivial reflexive Banach space
and $(V,\|\cdot\|_V)$, $(H,\|\cdot\|_H)$ and $(X,\|\cdot\|_X)$ Banach spaces.
We assume that $V\subseteq H$ with a continuous
 inclusion mapping and with embedding constant $\gamma>0$, i.e., the function $j:V\rightarrow H;\ v\mapsto v$ is continuous with
\begin{displaymath}\label{embedding constant}
\|j\|_{\text{op}}=\gamma.
\end{displaymath}
In particular, note that, although $V$ may also carry the relative topology induced by $H$, we assume throughout that $V$ is endowed with its own norm $\|\cdot\|_V$.
\\
Moreover, we denote by $W^*$ the space of antilinear functionals on $W$ and we endow it with the usual operator norm. 
\\
Furthermore, we consider a non-empty open subset $U\subseteq X$. Both $E$ and $U$ will later serve as definition sets for the parameters that shall be identified.  
\\ 
For normed spaces $(X_1,\|\cdot\|_{X_1})$, $(X_2,\|\cdot\|_{X_2})$, $(X_3,\|\cdot\|_{X_3})$ we denote by $\mathcal{S}(X_1\times X_2,X_3)$
the vector space of all continuous  sesquilinear (antilinear in the second argument) mappings $X_1\times X_2\rightarrow X_3$. Recall that
\begin{align*}
&\|\cdot\|_{\mathcal{S}(X_1\times X_2,X_3)}:\mathcal{S}(X_1\times X_2,X_3)\rightarrow\lbrack 0,\infty),\\
&\quad a\mapsto\sup\left\{\|a(x_1,x_2)\|_{X_3}:\, x_1\in X_1,\,x_2\in X_2\text{ with }\|x_1\|_{X_1},\|x_2\|_{X_2}\leq 1\right\}
\end{align*}
defines a norm on $\mathcal{S}(X_1\times X_2,X_3)$ and $(\mathcal{S}(X_1\times X_2,X_3),\|\cdot\|_{\mathcal{S}(X_1\times X_2,X_3)})$ is a Banach space, provided that $X_3$ is complete.
Note that elements of $\mathcal{S}(X_1\times X_2,X_3)$ are just bilinear in case of $\mathbb K=\R$.
For $a\in\mathcal{S}(X_1\times X_2,X_3)$ we define $a(x_1):=a(x_1,x_1)$.
\\
Moreover, $\Li(X_1,X_2)$ denotes the space of all bounded, linear mappings $X_1\to X_2$ and we endow this space with the usual operator norm denoted by
$\|\cdot\|_{\Li(X_1,X_2)}$ or simply $\|\cdot\|_{\text{op}}$, which turns $\Li(X_1,X_2)$ into a Banach space provided that $X_2$ is complete.
Instead of $\Li(X_1,X_1)$ we write $\Li(X_1)$ and we let $I_{X_1}$ denote the identity on $X_1$.
Furthermore, $X_1'$ denotes the topological dual space of $X_1$.
For the corresponding dual pairings we write
$\langle x_1,x_1'\rangle=x_1'(x_1)$, where $x_1\in X_1$, $x_1'\in X_1'$ or $x_1\in X^*$.
In addition, $\Is(X_1,X_2)$ denotes the set of all (topological) isomorphisms (i.e., linear homeomorphisms) between $X_1$ and $X_2$.
Recall that $\Is(X_1,X_2)$ is an open subset of $\Li(X_1,X_2)$, if $X_1$ and $X_2$ are Banach spaces.
In the case that $X_1=X_2$ we write $\Is(X_1)$ instead of $\Is(X_1,X_1)$.
If $\mathcal{H}$ is a Hilbert space, we denote the corresponding inner product by $(\cdot|\cdot)_{\mathcal{H}}$, where we drop the index, provided that no confusion is to be expected.

\vspace*{2ex}

For a subspace $\mathcal{D}\subseteq X_1$ and a linear mapping $A:\mathcal{D}\rightarrow X_2$, we denote by $\DT(A)$, $\ran(A)$ and $\kernel(A)$ the domain, the range and the null space, resp., and by $\|\cdot\|_A$ the corresponding graph norm. We say that $A$ is an operator from $X_1$ to $X_2$, even if $\mathcal{D}$ is a proper subspace of $X_1$.
If $\widetilde{\mathcal{D}}\subseteq X_1$ is another subspace and $\widetilde{A}:\widetilde{\mathcal{D}}\rightarrow X_2$ another linear operator, we write $A\subseteq \widetilde{A}$ provided that
$\mathcal{D}\subseteq\widetilde{\mathcal{D}}$ and $Ax=\widetilde{A}x$ for all $x\in\mathcal{D}$; in fact, we identify an operator $A:\mathcal{D}\rightarrow X_2$ with its graph $\{(x_1,Ax_1)|\,x_1\in\DT(A)\}$.
For an injective, linear mapping $A:\mathcal{D}\rightarrow X_2$ we put
$A^{-1}:=\{(Ax,x)|\,x\in\DT(A)\}$. In that case $A^{-1}$ is a univalent, linear operator with $\DT(A^{-1})=\ran(A)$.
\\
For $x_1\in X_1$ and $\varepsilon>0$ we set $B_\varepsilon(x_1):=\{u_1\in X_1:\,\|x_1-u_1\|\leq\varepsilon\}$.
\\
If $\Omega\subseteq X_1$ is non-empty and open and $f:\Omega\rightarrow X_2$ is Fr\'echet-differentiable at some point $x\in X_1$, we denote by
$\mathrm D_{\mathcal F}f(x)$ the Fr\'echet-derivative of $f$ at the point $x$.
\\
We consider continuous mappings
\begin{align*}
\mathfrak{a}_1:E\rightarrow\mathcal{S}(V\times W,\mathbb K),\, t\mapsto a_1^{(t)}:=\mathfrak{a}_1(t),
\end{align*}
\begin{align*}
\mathfrak{a}_2:U\rightarrow\mathcal{S}(H\times W,\mathbb K),\, m\mapsto a_2^{(m)}:=\mathfrak{a}_2(m),
\end{align*}
and
\begin{align*}
\mathfrak{c}:E\times U\rightarrow\mathcal{S}(H\times W,\mathbb{K});\,(t,m)\mapsto c^{(t,m)}:=\mathfrak{c}(t,m),
\end{align*}
where $E\times U$ carries the product topology,
and we assume that
\begin{align}\label{A1}
\sup_{\genfrac{}{}{0pt}{}{w\in W}{\|w\|_W=1}}|a_1^{(t)}(v,w)|\geq c(t)\|v\|_V
\end{align}
for all $v\in V$ and $t\in E$, where $c(t)>0$ for all $t\in E$ and $c(t)$ does not depend on $v$.
Moreover, we assume that for each $t\in E$ the sesquilinear form $\mathfrak{a}_1(t)$ is nondegenerate with respect to the second component, i.e., $a_1^{(t)}(v,w)=0$ for all $v\in V$ implies $w=0$. We notice that this forces $V\not=\{0\}$, as $W$ is nontrivial by assumption.
In particular, these assumptions are satisfied in the important case that $V=W$ (with equal norms) and $a_1^{(t)}$ is coercive, i.e.,
\begin{align}\label{coercivity}
\Real a_1^{(t)}(v,v)\geq c(t)\|v\|_V^2
\end{align}
for all $v\in V$ and some $c(t)>0$. Indeed, we then obtain 
\begin{displaymath}
\begin{split}
 \sup_{\genfrac{}{}{0pt}{}{w\in W}{\|w\|_W=1}}\big\lvert a_1^{(t)}(v,w)\big\rvert &=\sup_{\genfrac{}{}{0pt}{}{w\in V}{\|w\|_V=1}}\big\lvert a_1^{(t)}(v,w)\big\rvert \geq\|v\|_V\left|a_1^{(t)}\Big(\frac{1}{\|v\|_V}v,\frac{1}{\|v\|_V} v\Big)\right|\\
 &\geq \|v\|_V \Real\left(a_1^{(t)}\Big(\frac{1}{\|v\|_V}v,\frac{1}{\|v\|_V} v\Big)\right) \geq c(t)\|v\|_V
\end{split}
\end{displaymath}
for all $v\in V\setminus\{0\}$. Moreover, if $a_1^{(t)}(v,w)=0$ for all $v\in V$, then, in particular,
\begin{align*}
0=\Real a_1^{(t)}(w,w)\geq c(t)\|w\|_V^2,
\end{align*}
which yields $w=0$ since $c(t)>0$.

\vspace*{1ex}

For $t\in E$ and $m\in U$ let $C(t)$, $M(m)$ and $M(t,m)$ be positive real numbers satisfying
\begin{equation} \label{boundedness_a1}
 C(t)\geq\|\mathfrak{a}_1(t)\|_{\mathcal{S}(V\times W,\mathbb K)},
\end{equation}
\begin{equation} \label{boundedness_a2}
 M(m)\geq\|\mathfrak{a}_2(m)\|_{\mathcal{S}(H\times W,\mathbb K)},
\end{equation}
and
\begin{equation} \label{boundedness_c}
 M(t,m)\geq\|\mathfrak{c}(t,m)\|_{\mathcal{S}(H\times W,\mathbb K)}.
\end{equation}
Finally, let $\lambda:E\rightarrow\mathbb K$ be continuous.
We are especially interested in the case that
\begin{align*}
\mathfrak{c}(t,m)=\lambda(t)\mathfrak{a}_2(m)
\end{align*}
for all $t\in E$ and $m\in U$.
\vspace*{1ex}

Our first aim is to study, under various conditions, the existence and properties of solutions $u\in V$ to the problem
 \begin{align}\label{general 2}
 \forall\,w\in W:\,a_1^{(t)}(u,w)+c^{(t,m)}(u,w)=\varphi(w),
 \end{align}
 resp.
 \begin{align}\label{2}
 \forall\,w\in W:\,a_1^{(t)}(u,w)+\lambda(t) a_2^{(m)}(u,w)=\varphi(w),
 \end{align}
 where $\varphi\in W^*$ is given and $t\in E$ and $m\in U$ are parameters. 
 Problems \eqref{general 2} and \eqref{2} may be interpreted as the weak formulation of an elliptic boundary value problem, where $W$ serves as a space of test functions. In that case, the lower order terms of the corresponding differential operator are encoded in the form $c^{(t,m)}$ and they depend on the parameters $m$ and $t$, while $a_1^{(t)}$ essentially describes the highest order terms. 
 The solution space $V$ contains information on the boundary values. 
 
 In the inverse medium problem \cite{gbpl} or the inverse problem from THz tomography \cite{awts18} which we mentioned in the introduction, $m$ corresponds to a spatial material parameter, whereas $t$ represents the (fixed) frequency of the radiation. 

 \vspace*{2ex}
 
  An operator theoretic reformulation of our problem in the next section is the starting point of our studies.
 Afterwards, we will explore the dependence of the solution $u$ on $m$, $t$ and $\varphi$. In particular, we provide conditions guaranteeing that the dependence of $u$ on $m$ is continuously Fr\'echet-differentiable and the corresponding parameter-to-state operator satisfies the tangential cone condition, which indicates the quality of a local approximation of this operator by its linearisation. 
Finally, we sketch how to apply our abstract results to specific important examples.
More details will be delivered in a forthcoming paper.

\section{Operator theoretic formulation of \eqref{general 2}}\label{operator theoretic formulation}

\subsection{Associated operators}
In this subsection, we associate linear operators to the problem \eqref{general 2} in order to explore this problem using operator theoretic methods.
For that purpose,  we need the following two lemmas.
The first auxiliary result can be regarded as a Banach space version of the classical Lax-Milgram Lemma and it can be easily established applying the strategy used in the proof of Theorem 12 in \cite{Hayden}. For the reader's convenience we provide a complete proof.

\vspace*{1ex}

\begin{lemma}\label{Lax-Milgram}
For each $t\in E$ there exists an isomorphism $\mathcal T_t:V\rightarrow W^*$ such that
\begin{enumerate}
 \item $\|\mathcal T_t\|_{\Li(V,W^*)}\leq C(t)$, 
 \item $\|\mathcal T_t^{-1}\|_{\Li(W^*,V)}\leq\frac{1}{c(t)}$, and 
 \item $a_1^{(t)}(v,w)=(\mathcal T_t v)\lbrack w\rbrack$ for all $v\in V$ and $w\in W$.
\end{enumerate}
\end{lemma}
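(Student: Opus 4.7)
My plan is to construct $\mathcal{T}_t$ directly from the form $a_1^{(t)}$ and verify the three claimed properties in turn, with closed-range plus density giving surjectivity.

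First I would define, for each $t\in E$ and $v\in V$, the functional
\[
(\mathcal{T}_t v)[w]:=a_1^{(t)}(v,w),\qquad w\in W.
\]
Since $w\mapsto a_1^{(t)}(v,w)$ is antilinear and continuous, $\mathcal{T}_t v$ lies in $W^*$, and linearity of $v\mapsto\mathcal{T}_t v$ is immediate from the sesquilinearity of $a_1^{(t)}$. The continuity bound \eqref{boundedness_a1} gives $\|\mathcal{T}_t v\|_{W^*}=\sup_{\|w\|_W\leq 1}\lvert a_1^{(t)}(v,w)\rvert\leq C(t)\|v\|_V$, which already proves (a) and, by construction, (c).

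Next I would exploit assumption \eqref{A1}. By definition of the operator norm on $W^*$,
\[
\|\mathcal{T}_t v\|_{W^*}=\sup_{\|w\|_W=1}\lvert a_1^{(t)}(v,w)\rvert\geq c(t)\|v\|_V\qquad(v\in V).
\]
This shows $\mathcal{T}_t$ is injective; moreover, since $c(t)>0$, the estimate implies that $\mathcal{T}_t$ has closed range in $W^*$ (a Cauchy sequence in $\ran(\mathcal{T}_t)$ pulls back via $\mathcal{T}_t^{-1}$ to a Cauchy sequence in the Banach space $V$), and that the algebraic inverse $\mathcal{T}_t^{-1}:\ran(\mathcal{T}_t)\rightarrow V$ satisfies $\|\mathcal{T}_t^{-1}\varphi\|_V\leq\tfrac{1}{c(t)}\|\varphi\|_{W^*}$. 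Once surjectivity is established, this yields (b).

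The main obstacle, and the only place where nondegeneracy and reflexivity of $W$ enter, is surjectivity. I would argue by contradiction: suppose $\ran(\mathcal{T}_t)\subsetneq W^*$; since $\ran(\mathcal{T}_t)$ is closed, the Hahn-Banach theorem furnishes a nonzero continuous linear functional $\Phi\in(W^*)'$ vanishing on $\ran(\mathcal{T}_t)$. Because $W$ is reflexive, the canonical evaluation map identifies $(W^*)'$ with $W$ (one has to be mildly careful here in the complex case, since $W^*$ consists of antilinear functionals, but $W\ni w\mapsto(\varphi\mapsto\overline{\varphi(w)})\in(W^*)'$ is the desired conjugate-linear isometric isomorphism, so $\Phi$ corresponds to some $w_0\in W\setminus\{0\}$). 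Unwinding the identification gives $a_1^{(t)}(v,w_0)=(\mathcal{T}_t v)[w_0]=0$ for every $v\in V$, which by the assumed nondegeneracy in the second argument forces $w_0=0$, a contradiction.

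Combining the three steps, $\mathcal{T}_t$ is a continuous linear bijection between the Banach spaces $V$ and $W^*$, hence a topological isomorphism by the open mapping theorem (or directly from the two-sided norm bounds already obtained), and the estimates (a) and (b) hold as stated.
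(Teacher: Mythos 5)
Your proof is correct and follows essentially the same route as the paper: define $\mathcal T_t v = a_1^{(t)}(v,\cdot)$, get (a) and (c) from boundedness, get injectivity and the inverse bound plus closed range from \eqref{A1}, and obtain surjectivity via Hahn--Banach together with the reflexivity of $W$ and the nondegeneracy of $a_1^{(t)}$ in the second argument. The only (harmless) imprecision is in the displayed identification of $(W^*)'$ with $W$: the canonical evaluation $w\mapsto(\varphi\mapsto\varphi(w))$ is already the conjugate-linear isometric isomorphism onto $(W^*)'$, whereas $\varphi\mapsto\overline{\varphi(w)}$ is antilinear in $\varphi$; either way the conclusion $a_1^{(t)}(v,w_0)=0$ for all $v$ is unaffected.
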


\vspace*{1ex}

\begin{proof}
We claim that
\begin{align*}
\mathcal T_t : V\rightarrow W^*, \ v\mapsto a_1^{(t)}(v,\,\cdot\,)
\end{align*}
has the desired properties. Clearly, $\mathcal T_t$ is well-defined and linear with
\begin{align*}
\|\mathcal T_t v \|_{W^*} = \sup_{\genfrac{}{}{0pt}{}{w\in W}{\|w\|_W\leq1}} \big\lvert a_1^{(t)}(v,w) \big\rvert \leq C(t)\|v\|_V.
\end{align*}
This inequality further implies $\mathcal T_t\in\Li(V,W^*)$ with $\|\mathcal T_t\|_{\Li(V,W^*)}\leq C(t)$. Moreover, we estimate
\begin{align*}
\inf_{\genfrac{}{}{0pt}{}{v\in V}{\|v\|_V=1}}\|\mathcal{T}_t v\|_{W^*}
=\inf_{\genfrac{}{}{0pt}{}{v\in V}{\|v\|_V=1}}\sup_{\genfrac{}{}{0pt}{}{w\in W}{\|w\|_W=1}} \big\lvert a_1^{(t)}(v,w) \big\rvert \geq c(t),
\end{align*}
using \eqref{A1}.
This shows that $\mathcal T_t$ is injective and that the inverse
\begin{align*}
\mathcal{T}_t^{-1}:\mathcal{T}_t(V)\rightarrow V, \ \mathcal{T}_tv\mapsto v
\end{align*}
is bounded with $\|\mathcal T_t^{-1}\|_{\Li(\mathcal{T}_t(V),V)}\leq\frac{1}{c(t)}$, where $\mathcal{T}_t(V)$ is endowed with the restriction of the norm $\|\cdot\|_{W^*}$. In particular, $\mathcal{T}_t(V)$ and $V$ are topologically isomorphic. Hence,
$\mathcal{T}_t(V)$ is a Banach space, too, thus a closed subspace of $W^*$. So, it remains to verify that $\mathcal{T}_t(V)=W^*$. Suppose to the contrary that this fails.
By the Hahn-Banach theorem and the closedness of $\mathcal{T}_t(V)$, we can find a $\chi\in(W^*)'\setminus\{0\}$ such that $\chi|_{\mathcal{T}_t(V)}=0$. We consider 
\begin{align*}
\widehat\chi:W'\rightarrow\mathbb K, \ \psi\mapsto\overline{\chi(\overline{\psi})},
\end{align*}
which is an element of the bidual space of $W$, where
\begin{align*}
\overline{\psi}:W\rightarrow\mathbb K;\, w\mapsto\overline{\psi(w)}
\end{align*}
and the bar denotes complex conjugation. Since $W$ is reflexive, there exists a
$w\in W$ such that
$\widehat{\chi}(\psi)=\psi(w)$ for all $\psi\in W'$.
This yields $\overline{\chi(\varphi)}=\overline{\varphi}(w)$ resp. $\chi(\varphi)=\varphi(w)$ for all $\varphi\in W^*$. As a consequence, we derive on the one hand $w\not=0$, as $\chi$ is nontrivial, and on the other hand
\begin{align*}
 \big|a_1^{(t)}(v,w)\big| = |(\mathcal T_tv)\lbrack w\rbrack|  
=|\chi(\mathcal T_tv)|=0
\end{align*}
for all $v\in V$, which implies $w=0$ because $a_1^{(t)}$ is nondegenerate w.r.t. the second argument, which contradicts our assumption. 
\end{proof}

\vspace*{1ex}

\begin{remark}
Theorem 1.1 in \cite{KozonoYanagisawa} is another Banach space version of the classical Lax-Milgram lemma as our Lemma \ref{Lax-Milgram}. Note, however, that none of these two results completely implies the respective other one.
\end{remark}

\vspace*{1ex}

The next lemma constitutes an important step towards the possibility of using operator theory in treating problem \eqref{general 2}.

\vspace*{1ex}

\begin{lemma}\label{general Ausgangslemma}
For each pair $(t,m)\in E\times U$ there exists a unique bounded operator
$\mathcal C_{t,m}:H\rightarrow H$ with $\mathcal C_{t,m}(H)\subseteq V$ and with
 \begin{align}\label{general 1}
 a_1^{(t)}(\mathcal C_{t,m}x,w)=c^{(t,m)}(x,w)
 \end{align}
 for every $x\in H$ and each $w\in W$.
 In addition, the following assertions are valid.
\begin{enumerate}
\item The mapping $\mathcal C:E\times U\rightarrow\Li(H), \ (t,m)\mapsto\mathcal C_{t,m}$ is continuous.
\item The part of $\mathcal C_{t,m}$ in $V$, i.e., the linear operator
 \begin{align*}
 \mathcal C_{t,m}^V:V\rightarrow V, \ v\mapsto\mathcal C_{t,m}v
 \end{align*}
 is bounded and the mapping $\mathcal C^V:E\times U\rightarrow\Li(V), \ (t,m)\mapsto\mathcal C_{t,m}^V$ is continuous.
\item We have $\|\mathcal C_{t,m}x\|_V\leq\frac{M(t,m)}{c(t)}\cdot\|x\|_H$ for each $x\in H$.
\item The operators $\mathcal C_{t,m}$ and $\mathcal{C}_{t,m}^V$ are both compact if the embedding $j:V\rightarrow H$ is compact.
\end{enumerate}
\end{lemma}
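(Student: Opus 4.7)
My plan is to \emph{define} the operator $\mathcal C_{t,m}$ explicitly via the isomorphism $\mathcal T_t$ from Lemma \ref{Lax-Milgram} and then read off the four assertions from the corresponding properties of $\mathcal T_t$ and of the form $c^{(t,m)}$. For fixed $x\in H$, the assignment $w\mapsto c^{(t,m)}(x,w)$ is an antilinear functional on $W$ with
\[
\sup_{\|w\|_W\le 1}\bigl|c^{(t,m)}(x,w)\bigr|\le M(t,m)\|x\|_H,
\]
hence lies in $W^*$. I would therefore set $\mathcal C_{t,m}x := \mathcal T_t^{-1}\bigl(c^{(t,m)}(x,\cdot)\bigr)\in V$. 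Part (iii) of Lemma \ref{Lax-Milgram} immediately yields \eqref{general 1}, while uniqueness of $\mathcal C_{t,m}x$ is forced by the nondegeneracy of $a_1^{(t)}$ in its second argument; composing with the continuous embedding $j:V\to H$ turns $\mathcal C_{t,m}$ into an operator $H\to H$.

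Assertion (c) is then a one-line consequence of Lemma \ref{Lax-Milgram}(ii):
\[
\|\mathcal C_{t,m}x\|_V=\bigl\|\mathcal T_t^{-1}c^{(t,m)}(x,\cdot)\bigr\|_V\le\frac{M(t,m)}{c(t)}\,\|x\|_H.
\]
Multiplication by $\gamma$ shows $\mathcal C_{t,m}\in\Li(H)$; the same estimate combined with $\|v\|_H\le\gamma\|v\|_V$ also gives the boundedness half of (b).

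For the continuity claims in (a) and (b), associating to each $a\in\mathcal S(H\times W,\mathbb K)$ its linear representative $a^{\sharp}\in\Li(H,W^*)$, I can write $\mathcal C_{t,m}=j\circ\mathcal T_t^{-1}\circ\mathfrak c(t,m)^{\sharp}$ and analogously $\mathcal C^V_{t,m}=\mathcal T_t^{-1}\circ\mathfrak c(t,m)^{\sharp}\circ j$. The map $t\mapsto\mathcal T_t$ is continuous from $E$ to $\Li(V,W^*)$ by continuity of $\mathfrak a_1$ and takes values in the open subset $\Is(V,W^*)$ on which inversion is continuous; hence $t\mapsto\mathcal T_t^{-1}$ is continuous into $\Li(W^*,V)$. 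Combined with continuity of $\mathfrak c$ and the standard triangle inequality for composition of bounded operators, this yields continuity of $\mathcal C$ and $\mathcal C^V$. Assertion (d) finally drops out of the factorisations just displayed: in each case one factor is bounded and the other is the compact inclusion $j$, so the composition is compact.

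The only step I expect to require genuine care is the continuity of $t\mapsto\mathcal T_t^{-1}$; this rests on the openness of $\Is(V,W^*)$ in $\Li(V,W^*)$ and continuity of inversion there, both of which were already invoked in Section \ref{section_prelim}. Everything else is a bookkeeping exercise chaining together the Lax--Milgram-type bounds of Lemma \ref{Lax-Milgram} with the compactness of $j$ in part (d).
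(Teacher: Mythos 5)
Your construction is essentially identical to the paper's: both define $\mathcal C_{t,m}=j\,\mathcal T_t^{-1}\mathcal B_{t,m}$ with $\mathcal B_{t,m}x=c^{(t,m)}(x,\cdot)$, read off assertion c) from the bound $\|\mathcal T_t^{-1}\|\leq 1/c(t)$, obtain the continuity claims by composing $\inv_{V,W^*}\circ\mathcal T$ with the continuous assignment $(t,m)\mapsto\mathcal B_{t,m}$, and get compactness from the factor $j$. The one inaccuracy is your justification of uniqueness: nondegeneracy of $a_1^{(t)}$ in the \emph{second} argument is the hypothesis that yields surjectivity of $\mathcal T_t$, whereas uniqueness of $\mathcal C_{t,m}x$ requires that $a_1^{(t)}(v,w)=0$ for all $w\in W$ force $v=0$, which is the injectivity of $\mathcal T_t$ coming from the inf-sup condition \eqref{A1}; since Lemma \ref{Lax-Milgram} already gives you $\mathcal T_t\in\Is(V,W^*)$, this is a harmless misattribution rather than a gap.
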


\vspace*{1ex}

\begin{proof}
Let $t\in E$ and $w\in W$. Thanks to Lemma \ref{Lax-Milgram} we have an isomorphism
\begin{align*}
\mathcal{T}_t:V\rightarrow W^*
\end{align*}
with
\begin{displaymath}
 \|\mathcal{T}_t\|_{\Li(V,W^*)}\leq\|\mathfrak{a}_1(t)\|_{\mathcal S(V\times W,\mathbb K)}, \quad \|\mathcal{T}_t^{-1}\|_{\Li(W^*,V)}\leq\frac{1}{c(t)},
\end{displaymath}
and
\begin{align}\label{4}
a_1^{(t)}(v,w)=(\mathcal T_t v)\lbrack w\rbrack
\end{align}
for all $v\in V$. One easily verifies that the continuity of $\mathfrak{a}_1$ implies that the function 
\begin{align*}
\mathcal T:E\rightarrow\Li(V,W^*), \ t\mapsto\mathcal T_t
\end{align*}
is continuous. 
For $t\in E$ and $m\in U$ we consider the mapping
\begin{align*}
\mathcal B_{t,m}:H\rightarrow W^*, \ x\mapsto c^{(t,m)}(x,\,\cdot\,).
\end{align*}
We first observe that $\mathcal B_{t,m}$ is well-defined. Indeed, for $x\in H$ the mapping $c^{(t,m)}(x,\,\cdot\,)$ is clearly antilinear.
We further obtain
\begin{align*}
\big\lvert c^{(t,m)}(x,w) \big\rvert \leq \big\lVert c^{(t,m)}\big\rVert_{\mathcal{S}(H\times W,\mathbb K)}\cdot\|x\|_H\cdot\|w\|_W.
\end{align*}
Hence, $c^{(t,m)}(x,\,\cdot\,)$ is continuous with
$\|c^{(t,m)}(x,\,\cdot\,)\|_{W^*}\leq  \|c^{(t,m)}\|_{\mathcal{S}(H\times W,\mathbb K)}\cdot\|x\|_H$.
Since $\mathcal B_{t,m}$ is linear, as one easily verifies, the last inequality also shows that $\mathcal B_{t,m}$ is bounded
with $\|\mathcal B_{t,m}\|_{\Li(H,W^*)}\leq \|c^{(t,m)}\|_{\mathcal{S}(H\times W,\mathbb K)}$.
Moreover, we claim that the mapping
\begin{align*}
\mathcal B:E\times U\rightarrow\Li(H,W^*), \ m\mapsto\mathcal{B}_{t,m}
\end{align*}
is continuous. In fact, for $t,\widetilde t\in E$ and $m,\widetilde m\in U$ we compute
\begin{align*}
\|\mathcal B_{t,m}-\mathcal B_{\widetilde t,\widetilde m}\|_{\Li(H,W^*)}
&=\sup_{\genfrac{}{}{0pt}{}{x\in H}{\|x\|_H\leq 1}}\|c^{(t,m)}(x,\,\cdot\,)-c^{(\widetilde t,\widetilde{m})}(x,\,\cdot\,)\|_{W^*}\\
&=\sup_{\genfrac{}{}{0pt}{}{x\in H}{\|x\|_H\leq 1}}\sup_{\genfrac{}{}{0pt}{}{w\in W}{\|w\|_W\leq 1}}
|c^{(t,m)}(x,w)-c^{(\widetilde t,\widetilde m)}(x,w)|\\
&=\|c^{(t,m)}-c^{(\widetilde t,\widetilde m)}\|_{\mathcal S(H\times W,\mathbb K)}\\
&=\|\mathfrak{c}(t,m)-\mathfrak{c}(\widetilde t,\widetilde m)\|_{\mathcal S(H\times W,\mathbb K)}\xrightarrow[(t,m)\to(\widetilde t,\widetilde m)]{}0.
\end{align*}
Recall that we consider the canonical embedding
$j:V\rightarrow H, \ v\mapsto v$
(with embedding constant $\gamma$, see \eqref{embedding constant}).
We  put
\begin{align*}
\widetilde{\mathcal C}_{t,m}:=\mathcal T_t^{-1}\mathcal B_{t,m}\in\Li(H,V)
\end{align*}
as well as
\begin{align*}
{\mathcal C}_{t,m}:=j\widetilde{\mathcal C}_{t,m}\in\Li(H)
\end{align*}
and we consider
\begin{align*}
\mathcal{C}:E\times U\rightarrow\Li(H);\,(t,m)\mapsto\mathcal{C}_{t,m}.
\end{align*}
If the inclusion map $j$ is compact, $\mathcal C_{t,m}$ is compact as a product of a compact and a bounded linear operator.
Furthermore, $\mathcal C_{t,m}(H)\subseteq V$.
Thus, we immediately see that
$\mathcal{C}_{t,m}^V=\mathcal{T}_t^{-1}\mathcal{B}_{t,m}j$. Hence, $\mathcal{C}_{t,m}^V$ is a bounded operator and it is compact as the product of a bounded and a compact operator provided that $j$ is compact.
One easily verifies that the mapping
\begin{align*}
\Psi:\Li(W^*,V)\times\Li(H,W^*)\rightarrow\Li(H), \ (F,G)\mapsto jFG
\end{align*}
is a continuous bilinear mapping (with norm bounded by $\gamma$). Moreover, the mapping
\begin{align*}
\inv_{V,W^*}:\Is(V,W^*)\rightarrow\Is(W^*,V), \ T\mapsto T^{-1}
\end{align*}
is continuous.
Therefore $f:=\inv_{V,W^*}\circ\mathcal T$
and thus
\begin{align*}
g:E\times U\rightarrow\Li(W^*,V)\times\Li(H,W^*), \ (t,m)\mapsto (f(t),\mathcal B_{t,m})
\end{align*}
are continuous, too. Hence, $\mathcal C=\Psi\circ g$ is continuous.
Analogously, one can show that $\mathcal{C}^V$ is continuous.

For every $x\in H$ and $w\in W$ we estimate (see also above) 
\begin{align*}
\|\mathcal C_{t,m}x\|_V
=&\|\widetilde{\mathcal C}_{t,m}x\|_V\leq\|\mathcal T_t^{-1}\|_{\Li(W^*,V)}\cdot\|\mathcal B_{t,m}x\|_{W^*}
\leq\frac{1}{c(t)}\cdot\|\mathcal B_{t,m}x\|_{W^*}\\
\leq &\frac{1}{c(t)}\cdot\|\mathcal B_{t,m}\|_{\Li(H,W^*)}\cdot\|x\|_H
\leq\frac{1}{c(t)}\|c^{(t,m)}\|_{\mathcal{S}(H\times W,\mathbb K)}\cdot\|x\|_H\\
\leq &\frac{M(t,m)}{c(t)}\cdot\|x\|_H
\end{align*}
and we compute
\begin{align*}
a_1^{(t)}(\mathcal C_{t,m}x,w)
&=a_1^{(t)}(\widetilde{\mathcal C}_{t,m}x,w)
=a_1^{(t)}(\mathcal T_t^{-1}\mathcal B_{t,m} x,w)
\stackrel{\eqref{4}}{=}(\mathcal T_t\mathcal T_t^{-1}\mathcal B_{t,m} x)\lbrack w\rbrack\\
&=B_{t,m}x\lbrack w\rbrack=c^{(t,m)}(x,w).
\end{align*}
Consequently, $\mathcal C$  and $\mathcal C_{t,m}$ are mappings of the desired type
and assertion a) -- d) are established.

In order to finish the proof, it only remains to show that $\mathcal{C}_{t,m}$ is unique. For this purpose
let $\mathcal C_{t,m}'\in\Li(H)$ be another operator with $\mathcal C'(H)\subseteq V$ and
 \begin{align*}
 a_1^{(t)}(\mathcal C_{t,m}'x,w)=c^{(t,m)}(x,w)
 \end{align*}
 for every $x\in H$ and each $w\in W$, where $t\in E$ and $m\in U$. This yields 
 \begin{align*}
 \mathcal T_t\mathcal C_{t,m}'x\stackrel{\eqref{4}}{=}a_1^{(t)}(\mathcal{C}_{t,m}'x,\,\cdot\,)
 =c^{(t,m)}(x,\,\cdot\,)=\mathcal B_{t,m}x,
 \end{align*}
 which implies
 \begin{align*}
 \mathcal C_{t,m}'x=\mathcal T_t^{-1}\mathcal B_{t,m}x=\widetilde{\mathcal C}_{t,m}x=\mathcal C_{t,m}x.
 \end{align*}
 As a result, we have shown that $\mathcal C_{t,m}$ is unique.
\end{proof}

\vspace*{1ex}

\begin{definition}
For $(t,m)\in E\times U$ we call problem \eqref{general 2} \emph{strongly well-posed} if and only if for each $\varphi\in W^*$ there exists precisely one $u\in V$
such that \eqref{general 2} is satisfied.
\end{definition}

\vspace*{1ex}

\begin{proposition}\label{well-posedness}
Let $(t,m)\in E\times U$.
\begin{enumerate}
\item For fixed $\varphi\in W^*$, $u\in V$ solves \eqref{general 2} if and only if $\mathcal T_t(I_V+\mathcal{C}_{t,m}^V)u=\varphi$.
\item The subsequent statements are equivalent.
 \begin{enumerate}
  \item Problem \eqref{general 2} is strongly well-posed.
  \item The operator $I_V+\mathcal{C}_{t,m}^V$ is bijective.
 \end{enumerate}
 In that case, the operator $I_V+\mathcal{C}_{t,m}^V$ possesses a bounded inverse and the unique solution to problem \eqref{general 2} depends continuously on the data $\varphi$.
 \item If the embedding $j$ is compact and the condition
 \begin{align}\label{general injectivity assumption 1}
\bigg(\forall\,w\in W:\,a_1^{(t)}(u,w)+c^{(t,m)}(u,w)=0\bigg) \ \Longrightarrow \ u=0
 \end{align}
 is satisfied, then problem \eqref{general 2} is strongly well-posed.
\end{enumerate}
\end{proposition}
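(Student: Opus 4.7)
The three parts chain together, so I would prove them in order.

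For part (a), the plan is to unfold the definitions: by Lemma~\ref{Lax-Milgram}, for any $v\in V$ one has $(\mathcal{T}_t v)[w]=a_1^{(t)}(v,w)$, and by Lemma~\ref{general Ausgangslemma} one has $a_1^{(t)}(\mathcal{C}_{t,m}^V u,w)=c^{(t,m)}(u,w)$ for all $w\in W$. Linearity in the first slot then gives
\[
\bigl(\mathcal{T}_t(I_V+\mathcal{C}_{t,m}^V)u\bigr)[w]
 = a_1^{(t)}(u,w)+c^{(t,m)}(u,w),
\]
so the identity of functionals in $W^*$ on the left is exactly the ``for all $w\in W$'' statement \eqref{general 2}. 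No obstacle here, it is essentially a rewriting.

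For part (b), I would use that $\mathcal{T}_t\colon V\to W^*$ is an isomorphism (Lemma~\ref{Lax-Milgram}). By (a), the problem is strongly well-posed precisely when the map $\varphi\mapsto u$ is everywhere defined and single-valued, which, after composing with $\mathcal{T}_t^{-1}$, is equivalent to $I_V+\mathcal{C}_{t,m}^V\colon V\to V$ being bijective. The ``bounded inverse / continuous dependence'' addendum then comes from the Banach bounded-inverse theorem applied to the bijective bounded operator $I_V+\mathcal{C}_{t,m}^V$ on the Banach space $V$, combined with the boundedness of $\mathcal{T}_t^{-1}$ so that $\varphi\mapsto u=(I_V+\mathcal{C}_{t,m}^V)^{-1}\mathcal{T}_t^{-1}\varphi$ is continuous. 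Again straightforward once (a) is in hand.

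Part (c) is the heart of the proposition and the only place where compactness matters. The strategy is to invoke the Fredholm alternative for compact perturbations of the identity. By Lemma~\ref{general Ausgangslemma}(d), compactness of $j\colon V\hookrightarrow H$ makes $\mathcal{C}_{t,m}^V\in\Li(V)$ compact; hence $I_V+\mathcal{C}_{t,m}^V$ is Fredholm of index zero, so surjectivity is equivalent to injectivity. I would then translate the hypothesis \eqref{general injectivity assumption 1} into injectivity of $I_V+\mathcal{C}_{t,m}^V$ by running the equivalence of part (a) backwards: any $u$ in the kernel satisfies $\mathcal{T}_t(I_V+\mathcal{C}_{t,m}^V)u=0$ in $W^*$, which unfolds to $a_1^{(t)}(u,w)+c^{(t,m)}(u,w)=0$ for all $w\in W$, hence $u=0$ by assumption. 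Injectivity plus the Fredholm alternative yields bijectivity of $I_V+\mathcal{C}_{t,m}^V$, and (b) closes the argument.

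The main obstacle, such as it is, lies in part (c): one needs the compactness of $\mathcal{C}_{t,m}^V$ on $V$ itself (not just on $H$), which is exactly why Lemma~\ref{general Ausgangslemma} was stated with the factorisation $\mathcal{C}_{t,m}^V=\mathcal{T}_t^{-1}\mathcal{B}_{t,m}j$; with that in hand, the Fredholm step is standard.
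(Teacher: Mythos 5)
Your proposal is correct and follows essentially the same route as the paper's proof: part (a) by unfolding the definitions of $\mathcal T_t$ and $\mathcal C_{t,m}^V$, part (b) from the isomorphism property of $\mathcal T_t$ together with the open mapping theorem, and part (c) by translating \eqref{general injectivity assumption 1} into injectivity of $I_V+\mathcal{C}_{t,m}^V$ and applying the Fredholm alternative to the compact operator $\mathcal{C}_{t,m}^V$. No gaps.
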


\vspace*{1ex}

\begin{proof}
\emph{on a)}: For all $w\in W$, we compute, using \eqref{4},
{\allowdisplaybreaks
\begin{align*}
\left\langle w,\mathcal T_t(I_V+\mathcal{C}_{t,m}^V)u\right\rangle
&=a_1^{(t)}((I_V+\mathcal{C}_{t,m}^V)u,w)\\
&=a_1^{(t)}(u,w)+a_1^{(t)}(\mathcal{C}_{t,m}u,w)\\
&=a_1^{(t)}(u,w)+c^{(t,m)}(u,w),
\end{align*}
}
which implies the assertion.
\\[0,2cm]
\emph{on b)}: Since $\mathcal{T}_t$ is an isomorphism, the stated equivalence follows immediately from part a).
So, in the case that  $I_V+\mathcal{C}_{t,m}^V$ is
bijective, it possesses a bounded inverse due to the open mapping theorem.
Moreover, in this situation the unique solution $u$ to problem \eqref{general 2} is given by $u=(I_V+\mathcal{C}_{t,m}^V)^{-1}\mathcal{T}_t^{-1}\varphi$ and, consequently, depends
continuously on the given $\varphi$.
\\[0,2cm]
\emph{on c)}: Assume that $j$ is compact and condition \eqref{general injectivity assumption 1} is met.
By part a), condition \eqref{general injectivity assumption 1} is equivalent to $\kernel(\mathcal{T}_t(I_V+\mathcal{C}_{t,m}^V))=\{0\}$.
So, $I_V+\mathcal{C}_{t,m}^V$ is injective.
By Lemma \ref{general Ausgangslemma}, we derive that $\mathcal{C}_{t,m}^V$ is compact.
 Hence, $I_V+\mathcal{C}_{t,m}^V$ is an isomorphism by the Fredholm alternative (see, e.g., Theorem 15.9 in \cite{Fabian_et_al}). The assertion follows from part b).
\end{proof}

\vspace*{1ex}
 
 An important special case, in particular within a Hilbert space setting, occurs if $V$ coincides with $W$ and $V$ is densely embedded into $H$. Moreover, in that case a more detailed analysis of the involved operators is accessible.
Hence, for the remainder of this subsection we assume that $V=W$ and that $j$ has dense range, i.e., $V$ is dense in $H$.
We especially emphasise that $V$ is reflexive and non-trivial.
For $t\in E$ and $m\in U$ we put $a_{t,m}:=a:=a_1^{(t)}+c^{(t,m)}$ and we define
\begin{align*}
A_{t,m}:=A:=\left\{(u,\varphi)\in V\times H^*\Big|\,\forall\,v\in V:\,a_{t,m}(u,v)=\varphi(v)\right\}
\end{align*}
and
\begin{align*}
A_1^{(t)}:=\left\{(u,\varphi)\in V\times H^*\Big|\,\forall\,v\in V:\,a_1^{(t)}(u,v)=\varphi(v)\right\}.
\end{align*}
Using that $j$ has dense range, it is easy to show that $A_{t,m}$ and $A_1^{(t)}$ are univalent, linear relations, i.e., linear operators.
For given $\varphi\in H^*$ problem \eqref{general 2} may now be reformulated as follows: find $u\in\DT(A_{t,m})$ such hat
\begin{align*}
A_{t,m}u=\varphi.
\end{align*}
The operator $A_{t,m}$ corresponds to the differential operator governing the boundary value problems in the weak formulation.

\vspace*{1ex}

\begin{definition}
We still assume at this point that $V=W$ and that $j$ has dense range.
In that case, we call problem \eqref{general 2} \emph{$H$-well-posed} if for each $\varphi\in H^*$ there exists precisely one $u\in V$ such that
 \begin{align}\label{general 2 H-version}
 \forall\,v\in V:\,a_1^{(t)}(u,v)+c^{(t,m)}(u,v)=\varphi(v).
 \end{align}
\end{definition}

\vspace*{1ex}

By the very definitions, it is clear that problem \eqref{general 2} is $H$-well-posed if and only if $A_{t,m}$ is bijective.

Let $\varphi\in H^*$. Since the embedding $j$ is continuous, we have $\varphi\circ j\in V^*$, i.e., the mapping
 \begin{align*}
 j^\star:H^*\rightarrow V^*;\,\psi\mapsto\psi\circ j.
 \end{align*}
 is well-defined and, moreover, linear and bounded.
 Furthermore, it is injective (see Theorem \ref{general factorisation theorem} below).
Thus, if \eqref{general 2} is strongly well-posed or, equivalently, $I_V+\mathcal{C}_{t,m}^V$ is bijective, then problem  \eqref{general 2} is apparently $H$-well-posed, too.
However, the converse may fail in general because one can think, thanks to $j^\star$, of $H^*$ as a proper subspace of $V^*$ 
so that $H$-well-posedness is a weaker condition than being strongly well-posed: there are simply less conditional equations to be satisfied in order to guarantee $H$-well-posedness.

The next theorem gives a detailed analysis of the operators $A_{t,m}$ and $A_1^{(t)}$ and of the relationships among them 
as well as to $\mathcal{C}^V_{t,m}$. 
\vspace*{1ex}

\begin{theorem}\label{general factorisation theorem}
We consider $j^\star:H^*\rightarrow V^*;\,\psi\mapsto\psi\circ j$.
For $(t,m)\in E\times U$ the following assertions are valid.
\begin{enumerate}
 \item $A_{t,m}$ is a closed operator from $H$ to $H^*$.
 \item $j^\star$ is injective with dense range and $\|j^\star\|_{\op}=\gamma$.
 \item $A_1^{(t)}=(j^\star)^{-1}\mathcal{T}_t$; in particular, $A_1^{(t)}$ is a densely defined, continuously invertible, closed operator from $H$ to $H^*$.
 \item $\kernel(A_{t,m})=\kernel(I_V+\mathcal{C}_{t,m}^{V})$ and
 $\ran(A_{t,m})=(j^\star)^{-1}\left(\ran(\mathcal{T}_t(I_V+\mathcal{C}_{t,m}^V))\cap\ran(j^\star)\right)$.
 \item $A=A_{t,m}=A_1^{(t)}(I_V+\mathcal{C}_{t,m}^V)$.
 \item The subsequent statements are equivalent.
 \begin{enumerate}
  \item Problem  \eqref{general 2} is $H$-well-posed.
  \item The operator $A_{t,m}$ is bijective.
  \item The operator $I_V+\mathcal{C}_{t,m}^{V}$ is injective with $\ran(j^\star)\subseteq\ran(\mathcal{T}_t(I_V+\mathcal{C}_{t,m}^{V}))$.
 \end{enumerate}
 In that case, $A_{t,m}$ has a bounded inverse.
  \item Assume that  \eqref{general 2} is $H$-well-posed. Then the mapping
\begin{align*}
\mathcal{J}:\DT(A_{t,m})\rightarrow\DT(A_1^{(t)});\,u\mapsto (I_V+\mathcal{C}_{t,m}^V)u
\end{align*} 
is well-defined and bijective. Furthermore, $\mathcal{J}$ is continuous if both spaces $\DT(A)$ and $\DT(A_1^{(t)})$ are endowed with the respective graph norms where we consider $A_{t,m}$ and $A_1^{(t)}$ as operators from $H$ to $H^*$. In particular, $\mathcal{J}$ is an isomorphism.
 \item The operator $j^\star A_{1}^{(t)}$ is closable as an operator from $V$ to $V^*$
with $\overline{j^\star A_{1}^{(t)}}= \mathcal{T}_t$.
Suppose additionally that problem \eqref{general 2} is strongly well-posed. Then the operator $j^\star A_{t,m}$ is also closable as an operator from $V$ to $V^*$
with
\begin{align*}
\overline{j^\star A_{t,m}}=\mathcal{T}_t(I_V+\mathcal{C}_{t,m}^V).
\end{align*}
\end{enumerate}
\end{theorem}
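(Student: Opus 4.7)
My strategy is to prove (b), (c), and (e) as the foundation, then derive the remaining parts from the factorisation $A_{t,m} = A_1^{(t)}(I_V + \mathcal{C}_{t,m}^V)$ of (e). The isomorphism $\mathcal{T}_t$ from Lemma~\ref{Lax-Milgram}, the compact-or-bounded $\mathcal{C}_{t,m}^V$ from Lemma~\ref{general Ausgangslemma}, and the elementary operator $j^\star$ then do essentially all of the work, reducing the analysis of $A_{t,m}$ to that of $A_1^{(t)}$ composed with a bounded perturbation on $V$.

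\textbf{Foundation (b), (c), (e).} For (b), injectivity of $j^\star$ is immediate from density of $j(V)$ in $H$. The dense range of $j^\star$ uses the reflexivity of $V$: were $\ran(j^\star)$ not dense in $V^*$, Hahn-Banach would yield a nonzero $\chi \in (V^*)'$ vanishing on $\ran(j^\star)$; the reflexivity-plus-antilinear-conjugation argument used in the proof of Lemma~\ref{Lax-Milgram} represents $\chi$ by some $v \in V \setminus \{0\}$, and then $\psi(jv) = \chi(j^\star\psi) = 0$ for all $\psi \in H^*$ forces $jv = 0$, a contradiction. The norm equality $\|j^\star\|_{\op} = \gamma$ is routine. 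For (c), unwinding definitions shows that $u \in \DT(A_1^{(t)})$ iff $\mathcal{T}_t u \in \ran(j^\star)$, and then $A_1^{(t)} u = (j^\star)^{-1} \mathcal{T}_t u$; consequently $(A_1^{(t)})^{-1} = \mathcal{T}_t^{-1} j^\star$ is bounded on $H^*$, so $A_1^{(t)}$ is the inverse of a bounded operator, hence closed and continuously invertible, and its domain $\mathcal{T}_t^{-1}(\ran(j^\star))$ is dense in $V$ by (b), thus in $H$. Part (e) is immediate: by Lemma~\ref{general Ausgangslemma},
\[
a_{t,m}(u,v) = a_1^{(t)}(u,v) + a_1^{(t)}(\mathcal{C}_{t,m}^V u, v) = a_1^{(t)}\bigl((I_V + \mathcal{C}_{t,m}^V)u, v\bigr)
\]
for all $u,v \in V$, so $A_{t,m}$ and $A_1^{(t)}(I_V + \mathcal{C}_{t,m}^V)$ have identical graphs.

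\textbf{Consequences (a), (d), (f).} The crucial point for (a) is that $\mathcal{C}_{t,m}$, although a priori only an operator on $H$, has range in $V$. Given $u_n \in \DT(A_{t,m})$ with $u_n \to u$ in $H$ and $A_{t,m} u_n \to \varphi$ in $H^*$, set $z_n := (I_H + \mathcal{C}_{t,m}) u_n = (I_V + \mathcal{C}_{t,m}^V) u_n$; continuity of $\mathcal{C}_{t,m}$ on $H$ gives $z_n \to z := (I_H + \mathcal{C}_{t,m}) u$ in $H$, while $A_1^{(t)} z_n = A_{t,m} u_n \to \varphi$, so closedness of $A_1^{(t)}$ yields $z \in \DT(A_1^{(t)}) \subseteq V$; since $\mathcal{C}_{t,m} u \in V$ as well, the identity $u = z - \mathcal{C}_{t,m} u$ produces $u \in V$, $(I_V + \mathcal{C}_{t,m}^V) u = z \in \DT(A_1^{(t)})$, and $A_{t,m} u = \varphi$. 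Part (d) is a routine kernel/range computation from (e), using injectivity of $A_1^{(t)} = (j^\star)^{-1} \mathcal{T}_t$. Part (f) reads off from (d) the equivalence of bijectivity of $A_{t,m}$ with injectivity plus the range inclusion for $I_V + \mathcal{C}_{t,m}^V$; boundedness of $A_{t,m}^{-1}$ follows from the closed graph theorem applied to the closed operator of (a).

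\textbf{Parts (g), (h), and main obstacle.} Under $H$-well-posedness, $\mathcal{J} = (A_1^{(t)})^{-1} A_{t,m}$ is a bijection $\DT(A_{t,m}) \to \DT(A_1^{(t)})$ as a composition of bijections, and the estimates $\|A_1^{(t)} \mathcal{J} u\|_{H^*} = \|A_{t,m} u\|_{H^*}$ and $\|\mathcal{J} u\|_H \leq \|(A_1^{(t)})^{-1}\|_{\op} \cdot \|A_{t,m} u\|_{H^*}$ bound the graph norm of $\mathcal{J} u$ by that of $u$; the bounded-inverse theorem applied to the Banach spaces $(\DT(A_{t,m}), \|\cdot\|_{A_{t,m}})$ and $(\DT(A_1^{(t)}), \|\cdot\|_{A_1^{(t)}})$, complete by closedness from (a) and (c), then promotes $\mathcal{J}$ to an isomorphism. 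For (h), part (c) gives $j^\star A_1^{(t)} u = \mathcal{T}_t u$ on the dense subspace $\DT(A_1^{(t)}) \subseteq V$; since $\mathcal{T}_t \in \Li(V, V^*)$ is continuous, the closure of the graph of this restriction is precisely the graph of $\mathcal{T}_t$, so the operator is closable with closure $\mathcal{T}_t$. Under strong well-posedness, Proposition~\ref{well-posedness} makes $I_V + \mathcal{C}_{t,m}^V$ an isomorphism on $V$, so $\DT(A_{t,m}) = (I_V + \mathcal{C}_{t,m}^V)^{-1}(\DT(A_1^{(t)}))$ is again dense in $V$, and on this domain (c) and (e) give $j^\star A_{t,m} u = \mathcal{T}_t(I_V + \mathcal{C}_{t,m}^V) u$, which extends continuously to $\mathcal{T}_t(I_V + \mathcal{C}_{t,m}^V)$ on all of $V$. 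The only genuinely delicate point is (a): without exploiting the hidden inclusion $\mathcal{C}_{t,m}(H) \subseteq V$ from Lemma~\ref{general Ausgangslemma}, convergence in the coarser $H$-topology would be too weak to force $u \in V$; the reflexivity argument for dense range of $j^\star$ in (b) is a secondary technical subtlety.
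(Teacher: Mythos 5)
Your proposal is correct, and for most parts it follows the same factorisation strategy as the paper: everything is routed through $A_1^{(t)}=(j^\star)^{-1}\mathcal{T}_t$ and $A_{t,m}=A_1^{(t)}(I_V+\mathcal{C}_{t,m}^V)$, with (b), (c), (e) as the backbone and (d), (f), (g), (h) read off from them. The genuine divergence is part (a). The paper proves closedness of $A_{t,m}$ directly, before the factorisation is available: from $(u_n,\varphi_n)\to(u,\varphi)$ in $H\times H^*$ it uses the inf-sup estimate \eqref{A1} to produce test vectors $v_n$ with $|a_1^{(t)}(u_n,v_n)|\geq\tfrac{c(t)}{2}\|u_n\|_V$, deduces boundedness of $(u_n)$ in $V$, and extracts weak limits via Banach--Alaoglu and the reflexivity of $V$. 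You instead reorder the proof, establish (c) and (e) first, and then obtain (a) by setting $z_n=(I_H+\mathcal{C}_{t,m})u_n$, passing to the limit in $H$ using $\mathcal{C}_{t,m}\in\Li(H)$, invoking closedness of $A_1^{(t)}$ (the inverse of the bounded operator $\mathcal{T}_t^{-1}j^\star$), and recovering $u=z-\mathcal{C}_{t,m}u\in V$ from the hidden regularity $\mathcal{C}_{t,m}(H)\subseteq V$. This is a valid, non-circular reordering --- the paper's proofs of (c) and (e) nowhere use (a) --- and it trades the weak-compactness extraction for the algebra of the factorisation; it is arguably cleaner, though it makes closedness of $A_{t,m}$ logically dependent on Lemma \ref{general Ausgangslemma} and part (c) rather than a self-contained consequence of \eqref{A1} and reflexivity. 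Your remaining deviations are minor but sound: in (g) you realise $\mathcal{J}$ as $(A_1^{(t)})^{-1}A_{t,m}$, so bijectivity is a composition of bijections rather than the paper's hand-made surjectivity argument, and in (h) you approximate $u\in V$ by elements of the dense subspace $\DT(A_{t,m})$ (dense because $(I_V+\mathcal{C}_{t,m}^V)^{-1}$ is an isomorphism of $V$) instead of approximating functionals by $j^\star(\psi_n)$ as the paper does; both routes rest on the same ingredients.
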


\vspace*{1ex}

\begin{proof}
\emph{on a)}: Take an arbitrary sequence $(u_n,\varphi_n)_n$ in $A_{t,m}$ converging in $H\times H^*$ to $(u,\varphi)$.
In particular, $(u_n)_n$ converges in $H$ weakly to $u$.
Furthermore, we recall that $u_n\in V$ for all $n\in\N$.
Using \eqref{A1},
pick a $v_n\in V$ for each $n\in\N$ such that $\|v_n\|_V=1$ and
\begin{align}\label{finding limit point}
|a_1^{(t)}(u_n,v_n)|\geq\frac{c(t)}{2}\|u_n\|_V.
\end{align}
As $V$ is reflexive by assumption, we may extract a subsequence $(v_{n_k})_k$ weakly converging to a $v\in V$ due to the Banach-Alaoglu theorem.
One immediately sees that $\lim_{k\to\infty}\varphi_{n_k}(v_{n_k})=\varphi(v)$.
Since $\lim_{k\to\infty}u_{n_k}=u$ in $H$, we obtain
$$\lim_{k\to\infty}c^{(t,m)}(u_{n_k},\cdot)=c^{(t,m)}(u,\cdot)$$ with convergence in $H^*$.
Therefore, the same considerations as before yield
$$\lim_{k\to\infty}c^{(t,m)}(u_{n_k},v_{n_k})=c^{(t,m)}(u,v),$$
too.
Hence,
\begin{align*}
a_1^{(t)}(u_{n_k},v_{n_k})=\varphi_{n_k}(v_{n_k})-c^{(t,m)}(u_{n_k},v_{n_k})\xrightarrow[k\to\infty]{}\varphi(v)-c^{(t,m)}(u,v).
\end{align*}
As a result, the sequence $(a_1^{(t)}(u_{n_k},v_{n_k}))_k$ is bounded. Consequently, thanks to \eqref{finding limit point}, the sequence $(u_{n_k})_k$
is bounded in $V$. 
Employing once again the Banach-Alaoglu theorem, we assume w.l.o.g. that $(u_{n_k})_k$ converges in $V$ weakly to some $u_0\in V$. 
Then $(u_{n_k})_k$ also converges in $H$ weakly to $u_0$ because the embedding $j$ is continuous. The uniqueness of weak limits
implies $u=u_0$ and thus $u\in V$. Now, it is clear that $\lim_{n\to\infty}a_{t,m}(u_n,w)=a_{t,m}(u,w)$ and $\lim_{n\to\infty}\varphi_n(w)=\varphi(w)$ for all $w\in V$.
From this we conclude $(u,\varphi)\in A_{t,m}$. 
\\[0,2cm]
\emph{on b)}: This is essentially a standard result from functional analysis and follows directly from the facts that $V$ is reflexive and that $j$ is injective with dense range and with $\|j\|_{\op}=\gamma$. 
\\[0,2cm]
\emph{on c)}:
Let $u\in\DT((j^\star)^{-1}\mathcal{T}_t)$, i.e., $u\in V$ with $\mathcal{T}_tu\in\DT((j^\star)^{-1})=\ran(j^\star)$.
Consequently, there exists $\varphi\in H^*$ such that $\mathcal{T}_tu=\varphi j$.
We therefore calculate
\begin{align*}
\varphi(v)=\left\langle v,\varphi j\right\rangle=\left\langle v,\mathcal{T}_tu\right\rangle=a_1^{(t)}(u,v)
\end{align*}
for all $v\in V$
which shows $(u,\varphi)\in A_1^{(t)}$. This means $u\in\DT(A_1^{(t)})$ and $A_1^{(t)}u=\varphi=(j^\star)^{-1}\mathcal{T}_tu$.
It only remains to verify that $\DT(A_1^{(t)})\subseteq\DT((j^\star)^{-1}\mathcal{T}_t)$ in order to show that
$A_1^{(t)}=(j^\star)^{-1}\mathcal{T}_t$.
Let $u\in\DT(A_1^{(t)})$. Then,
\begin{align*}
\left\langle v,\mathcal{T}_tu\right\rangle=a_1^{(t)}(u,v)=\langle v,A_1^{(t)}u\rangle=\langle jv,A_1^{(t)}u\rangle=\langle v,j^\star A_1^{(t)}u\rangle
\end{align*}
for all $v\in V$ and thus $\mathcal{T}_tu=j^\star A_1^{(t)}u$, i.e., $\mathcal{T}_tu\in\ran(j^\star)$ and $u\in\DT((j^\star)^{-1}\mathcal{T}_t)$.
By part b), $(j^\star)^{-1}$ is continuously invertible by $j^\star$
and densely defined.
Hence, $A_1^{(t)}$ possesses a bounded inverse given by $\mathcal{T}_t^{-1}j^\star$.
Consequently, $A_1^{(t)}$ is closed and $\DT(A_1^{(t)})=\mathcal{T}_t^{-1}(\ran(j^\star))$ is dense in $V$ and thus in $H$.
\\[0,2cm]
\emph{on d)}:
This is a direct consequence of part c) and e).
\\[0,2cm]
\emph{on e)}:
Let $(u,\varphi)\in A_{t,m}$. Then $u\in V$ and $a_1^{(t)}(u,v)+c^{(t,m)}(u,v)=a(u,v)=\varphi(v)$ for all $v\in V$.
Due to Lemma \ref{general Ausgangslemma}, this yields
\begin{align*}
a_1^{(t)}((I_V+\mathcal{C}_{t,m}^V)u,v)
&=a_1^{(t)}(u,v)+a_1^{(t)}(\mathcal{C}_{t,m}u,v)\\
&=a_1^{(t)}(u,v)+c^{(t,m)}(u,v)\\
&=a(u,v)=\varphi(v)=\langle v,A_{t,m}u\rangle
\end{align*}
for each $v\in V$, i.e., $((I_V+\mathcal{C}_{t,m}^V)u,A_{t,m}u)\in A_1^{(t)}$. We have thus shown that
$u\in V$ and $(I_V+\mathcal{C}_{t,m}^V)u\in\DT(A_1^{(t)})$ 
with $A_1^{(t)}(I_V+\mathcal{C}_{t,m}^V)u=A_{t,m}u$ for all $u\in\DT(A_{t,m})$, i.e., $A_{t,m}\subseteq A_1^{(t)}(I_V+\mathcal{C}_{t,m}^V)$.
\\[0,2cm]
So, it only remains to check that
$\DT(A_1^{(t)}(I_V+\mathcal{C}_{t,m}^V))\subseteq\DT(A_{t,m})$. 
For that purpose, pick $u\in \DT(A_1^{(t)}(I_V+\mathcal{C}_{t,m}^V))$, i.e., $u\in V$
with $x:=(I_V+\mathcal{C}_{t,m}^V)u\in\DT(A_1^{(t)})$, and put $\varphi:=A_1^{(t)}(I_V+\mathcal{C}_{t,m}^V)u=A_1^{(t)}x\in H^*$.
Using the same computation as above, we then arrive at
\begin{align*}
\varphi(v)=\langle v,A_1^{(t)}x\rangle=a_1^{(t)}(x,v)=a_1^{(t)}((I_V+\mathcal{C}_{t,m}^V)u,v)=a(u,v)=a_{t,m}(u,v)
\end{align*}
for all $v\in V$ and we conclude that $u\in\DT(A_{t,m})$.
\\[0,2cm]
\emph{on f)}: We already know that (i) and (ii) are equivalent .
Furthermore, the addendum follows from part a) and the closed graph theorem.
Thanks to part d), $A_{t,m}$ is injective if and only if $I_V+\mathcal{C}_{t,m}^V$ is injective.
Moreover,
\begin{align*}
\ran(\mathcal{T}_t(I_V+\mathcal{C}_{t,m}^V))\cap\ran(j^\star)=j^\star(\ran(A_{t,m}))\subseteq \ran(j^\star).
\end{align*}
As $j^\star$ is injective, we have $\ran(A_{t,m})=H^\star$ if and only if $j^\star(\ran(A_{t,m}))=\ran(j^\star)$.
As a consequence, $\ran(A_{t,m})=H^\star$ if and only if $\ran(j^\star)\subseteq \ran(\mathcal{T}_t(I_V+\mathcal{C}_{t,m}^V))$.
This shows that (ii) and (iii) are equivalent.
\\[0,2cm]
\emph{on g)}:
We assume that problem \eqref{general 2} is $H$-well-posed.
Clearly, $\mathcal{J}$ is well-defined and injective due to part e) and f) above.
In addition, it is also surjective.
Indeed, pick $x\in\DT(A_1^{(t)})$.
Since $A_{t,m}$ is surjective, we may take $u\in\DT(A_{t,m})\subseteq V$ such that $A_{t,m}u=A_1^{(t)}x$.
We then obtain, employing part e),
\begin{align*}
A_1^{(t)}x=A_{t,m}u=A_1^{(t)}(I_V+\mathcal{C}_{t,m}^V)u,
\end{align*}
which implies $x=(I_V+\mathcal{C}_{t,m}^V)u$ due to the injectivity of $A_1^{(t)}$ (see part c)).
Thus $u$ belongs to
$\DT(A_1^{(t)}(I_V+\mathcal{C}_{t,m}^V))=\DT(A_{t,m})$ and satisfies $\mathcal{J}u=x$.

We estimate
\begin{align*}
\|\mathcal{J}u\|_{A_1^{(t)}}&=\|\mathcal{J}u\|_H+\|A_1^{(t)}\mathcal{J}u\|_{H^*}=\|\mathcal{J}u\|_H+\|A_1^{(t)}(I_V+\mathcal{C}_{t,m}^V)u\|_{H^*}\\
&\leq\|I_H+\mathcal{C}_{t,m}\|_{\Li(H)}\cdot\|u\|_H+\|A_{t,m}u\|_{H^*}\\
&\leq\upxi(\|u\|_H+\|A_{t,m}u\|_{H^*})=\upxi\|u\|_{A_{t,m}}
\end{align*}
for all $u\in\DT(A)$, where $\upxi:=\max\{1,\|I_H+\mathcal{C}_{t,m}\|_{\Li(H)}\}$.
Thanks to the open mapping theorem and the fact that $(A_1^{(t)},\|\cdot\|_{A_1^{(t)}})$ and $(A_{t,m},\|\cdot\|_{A_{t,m}})$ are Banach spaces by part a) and c) above, $\mathcal{J}$ is an isomorphism. 
\\[0,2cm]
\emph{on h)}:
 We first establish the claim for $j^\star A_{t,m}$ and assume that problem \eqref{general 2} is strongly well-posed.
By c) and e), $j^\star A_{t,m}\subseteq \mathcal{T}_t(I_V+\mathcal{C}_{t,m}^V)$. Since $\mathcal{T}_t(I_V+\mathcal{C}_{t,m}^V)$ is closed as a bounded operator from $V$ to $V^*$, we derive that $j^\star A_{t,m}$ is closable with $\overline{j^\star A_{t,m}}\subseteq \mathcal{T}_t(I_V+\mathcal{C}_{t,m}^V)$.
Fix $(u,\varphi)\in \mathcal{T}_t(I_V+\mathcal{C}_{t,m}^V)$ and pick a sequence $(\psi_n)_n$ from $H^*$ such that $\lim_{n\to\infty}j^\star(\psi_n)=\varphi$ in $V^*$.
This is in fact possible since  $j^\star$ has dense range.
As problem \eqref{general 2} is strongly well-posed, the operator $\mathcal{T}_t(I_V+\mathcal{C}_{t,m}^V)$ has a bounded inverse
thanks to Proposition \ref{well-posedness}.
We thus obtain
\begin{align*}
V\ni\,u_n:=(\mathcal{T}_t(I_V+\mathcal{C}_{t,m}^V))^{-1}(j^\star(\psi_n))\xrightarrow[n\to\infty]{}(\mathcal{T}_t(I_V+\mathcal{C}_{t,m}^V))^{-1}\varphi=u
\end{align*}
in $V$. By part a) of Proposition \ref{well-posedness}, $a_{t,m}(u_n,v)=j^\star(\psi_n)(v)=\psi_n(j(v))=\psi_n(v)$ for all $v\in V$ and we therefore
have $u_n\in\DT(A_{t,m})$
with $A_{t,m}u_n=\psi_n$ for every $n\in\N$. Thus, we finally deduce
\begin{align*}
j^\star A_{t,m}\ni\,(u_n,j^\star(\psi_n))\xrightarrow[n\to\infty]{}(u,\varphi)
\end{align*}
in $V\times V^*$. This shows $\overline{j^\star A_{t,m}}\supseteq \mathcal{T}_t(I_V+\mathcal{C}_{t,m}^V)$.

The proof for $j^\star A_{1}^{(t)}$ is similar, but simpler.
\end{proof}

\vspace*{1ex}

Assume for a moment that problem \eqref{general 2} is strongly well-posed for \emph{all} $(t,m)\in E\times U$. For fixed $\varphi\in H^\star$,
we then may consider the by now well-defined parameter-to-state operator
\begin{align}\label{parameter-to-state operator}
E\times U\rightarrow\DT(A_{t,m});\, (t,m)\mapsto u_{t,m,\varphi}:=A_{t,m}^{-1}\varphi.
\end{align}
The inverse problem w.r.t $m$ arising from problem \eqref{general 2} consists in reconstructing $m$ from $u_{t,m,\varphi}$ for
fixed $\varphi\in H^\star$ and $t\in E$. Thanks to Theorem \ref{general factorisation theorem}
we obtain the following commutative diagram.

\begin{centering}
\begin{align}\notag
\begin{xy}
  \xymatrix{
     H\supseteq \DT(A_{t,m})\ar[d]_{I_V+\mathcal{C}_{t,m}^V}^{\cong} \ar[r]^{A} & H^*   \\
     H\supseteq \DT(A_1^{(t)})\ar[ru]_{A_1^{(t)}} &
  }
 \end{xy}
\end{align}
\end{centering}

Put another way, the operator $A=A_{t,m}$ factorises into an operator that does not depend at all on the parameter $m$ and the isomorphism
$I_V+\mathcal{C}_{t,m}^V$ on the solution space
$V$ that encompasses the dependence on $m$. This explains why the properties of the operator $I_V+\mathcal{C}_{t,m}^V$
are crucial for the analytic features of the parameter-to-state operator as explored in section \ref{inverse problem} below.

As a direct consequence of Theorem \ref{general factorisation theorem} we finally see that in an important situation the terms
of strong well-posedness and $H$-well-posedness coincide.

\vspace*{1ex}

\begin{corollary}
Besides the premises of  Theorem \ref{general factorisation theorem}, suppose that $j$ is compact. Then problem \eqref{general 2}
is strongly well-posed if and only if it is $H$-well-posed.
\end{corollary}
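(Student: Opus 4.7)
The plan is to combine the equivalences established in Theorem~\ref{general factorisation theorem}(f) with the Fredholm alternative, using the compactness of $\mathcal{C}_{t,m}^V$ provided by Lemma~\ref{general Ausgangslemma}(d).

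For the direction ``strongly well-posed $\Rightarrow$ $H$-well-posed'', nothing new is needed: this implication is already noted in the discussion preceding Theorem~\ref{general factorisation theorem}, because $j^\star:H^*\to V^*$ is injective, so every $\varphi\in H^*$ can be identified with the element $j^\star\varphi\in V^*$, and strong well-posedness furnishes the unique solution to \eqref{general 2} for all data in $W^*=V^*$, in particular for data of the form $j^\star\varphi$.

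For the converse, suppose that problem \eqref{general 2} is $H$-well-posed. By Theorem~\ref{general factorisation theorem}(f), this is equivalent to $A_{t,m}$ being bijective, which in turn implies that $I_V+\mathcal{C}_{t,m}^V$ is injective. Since $j:V\to H$ is assumed compact, Lemma~\ref{general Ausgangslemma}(d) yields that $\mathcal{C}_{t,m}^V\in\Li(V)$ is compact. The Fredholm alternative applied to the compact perturbation $\mathcal{C}_{t,m}^V$ of the identity on the Banach space $V$ then upgrades injectivity of $I_V+\mathcal{C}_{t,m}^V$ to bijectivity. Applying Proposition~\ref{well-posedness}(b), we conclude that problem \eqref{general 2} is strongly well-posed.

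There is no serious obstacle: the entire argument is a bookkeeping exercise assembling previously proved results. The only place where one must be careful is to invoke the correct characterisations — namely, using Proposition~\ref{well-posedness}(b) for strong well-posedness (bijectivity of $I_V+\mathcal{C}_{t,m}^V$) and Theorem~\ref{general factorisation theorem}(f) for $H$-well-posedness (which in general requires both injectivity of $I_V+\mathcal{C}_{t,m}^V$ and the range condition $\ran(j^\star)\subseteq\ran(\mathcal{T}_t(I_V+\mathcal{C}_{t,m}^V))$). Compactness of $j$ is precisely what removes the need to verify the range condition separately, since Fredholm theory delivers surjectivity for free once injectivity is known.
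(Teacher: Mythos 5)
Your proof is correct and follows essentially the same route as the paper: the paper simply cites Proposition \ref{well-posedness}(c) for the nontrivial direction, whose internal mechanism (compactness of $\mathcal{C}_{t,m}^V$ from Lemma \ref{general Ausgangslemma}(d) plus the Fredholm alternative upgrading injectivity of $I_V+\mathcal{C}_{t,m}^V$ to bijectivity) is exactly what you spell out. The only difference is that you inline that argument rather than invoking the proposition as a black box.
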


\vspace*{1ex}

\begin{proof}
If problem \eqref{general 2} is $H$-well-posed, then $I_V+\mathcal{C}_{t,m}^{V}$ is injective.
Hence, problem \eqref{general 2} is strongly well-posed 
since we can apply part c) of Proposition \ref{well-posedness},
thanks to the compactness of $j$.
\end{proof}

\section{Well-posedness results for the variational problem \eqref{general 2}} \label{section_ex_uni}

We are now able to formulate and to prove our two main well-posedness results for the variational problem \eqref{general 2}. We have a local and a global well-posedness result for problem \eqref{general 2}
in the sense that in the global version we can establish, under appropriate conditions, well-posedness of problem \eqref{general 2} w.r.t. the entire
parameter range $E\times U$ (see Remark \ref{Bedeutung von tilde U} below), whereas in the local version we may guarantee well-posedness only on a suitable open subset of $E\times U$. We start with the \emph{global version}.

\vspace*{1ex}

\begin{theorem}\label{general global version}
Let
$\mathcal C_{t,m}:H\rightarrow H$ be the operator considered in Lemma \ref{general Ausgangslemma} and
 $\mathcal C_{t,m}^V$ its part in $V$.
Assume that the inclusion $V\subseteq H$ is compact and that for all $(t,m)\in E\times \widetilde U$ and all $u\in V$ the implication 
 \begin{align}\label{general injectivity assumption}
\bigg(\forall\,w\in W:\,a_1^{(t)}(u,w)+c^{(t,m)}(u,w)=0\bigg) \ \Longrightarrow \ u=0
 \end{align}
 is valid, where $\widetilde U$ is a non-empty subset of $U$. 
Then the following claims hold.
\begin{enumerate}
\item There exists a set $\mathcal U\subseteq E\times U$ open in $E\times X$ and
 containing $E\times\widetilde U$ such that $I_V+\mathcal C_{t,m}^V$ is invertible for all
$(t,m)\in \mathcal U$ and the inverse depends continuously on $(t,m)\in\mathcal U$.
Furthermore, for each $t\in E$ there exists a set $\mathcal{U}_t\subseteq U$ open in $X$ containing $\widetilde U$ such that
$I_V+\mathcal C_{t,m}^V$ is invertible for all $m\in \mathcal U_t$.
\item For each antilinear functional $\varphi\in W^*$ and each pair $(t,m)\in\mathcal U$ there exists a unique $u\in V$ such that
  \begin{align*}
 \forall\,w\in W:\,a_1^{(t)}(u,w)+c^{(t,m)}(u,w)=\varphi(w)
 \end{align*}
 and this unique $u$ depends continuously on $t$, $m$, and $\varphi$.
In addition, we have
 \begin{align}\label{general 3}
 \|u\|_V\leq\frac{1}{c(t)}\|(I_V+\mathcal C_{t,m}^V)^{-1}\|_{\Li(V)}\|\varphi\|_{W^*}.
 \end{align}
The analogous conclusions are valid for fixed $t\in E$ and $m\in\mathcal{U}_t$.
\end{enumerate}
\end{theorem}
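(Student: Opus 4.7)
The plan is to combine Proposition \ref{well-posedness} with the continuous dependence of $\mathcal{C}_{t,m}^V$ on $(t,m)$ established in Lemma \ref{general Ausgangslemma}, exploiting that the set of isomorphisms is open in $\Li(V)$ and inversion is continuous there. The starting observation is that for any $(t,m) \in E \times \widetilde U$, the compactness of $j:V \to H$ forces $\mathcal{C}_{t,m}^V$ to be compact by Lemma \ref{general Ausgangslemma} d), so the injectivity hypothesis \eqref{general injectivity assumption} combined with part c) of Proposition \ref{well-posedness} yields strong well-posedness at $(t,m)$, and hence, by part b) of that proposition, invertibility of $I_V + \mathcal{C}_{t,m}^V$ in $\Li(V)$.

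For part a), I would set
\[
\mathcal U := \bigl\{(t,m)\in E\times U : I_V + \mathcal{C}_{t,m}^V \in \Is(V)\bigr\}
\]
and, for fixed $t\in E$, similarly $\mathcal U_t := \{m\in U : I_V + \mathcal{C}_{t,m}^V \in \Is(V)\}$. By Lemma \ref{general Ausgangslemma} b), the map $(t,m)\mapsto I_V + \mathcal{C}_{t,m}^V$ is continuous from $E\times U$ to $\Li(V)$; since $\Is(V)$ is open in $\Li(V)$ and $U$ is open in $X$, both $\mathcal U$ (viewed as open in $E\times X$) and $\mathcal U_t$ (open in $X$) arise as preimages under this continuous map and, by the previous step, contain $E\times\widetilde U$ and $\widetilde U$, respectively. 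Continuity of $(t,m)\mapsto (I_V + \mathcal{C}_{t,m}^V)^{-1}$ on $\mathcal U$ then follows by composing with the continuous inversion map on $\Is(V)$.

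For part b), given $\varphi \in W^*$ and $(t,m)\in\mathcal U$, Proposition \ref{well-posedness} b) furnishes the unique representation $u = (I_V + \mathcal{C}_{t,m}^V)^{-1} \mathcal{T}_t^{-1}\varphi$. Joint continuity in $(t,m,\varphi)$ reduces to continuity of $(t,m)\mapsto (I_V + \mathcal{C}_{t,m}^V)^{-1}$ (already established), continuity of $t\mapsto \mathcal{T}_t^{-1}$ (obtained by composing the continuous map $\mathcal T$ from the proof of Lemma \ref{general Ausgangslemma} with the continuous inversion on $\Is(V,W^*)$), and the bounded linear dependence on $\varphi$; the estimate \eqref{general 3} is immediate from submultiplicativity of the operator norm and the bound $\|\mathcal{T}_t^{-1}\|_{\Li(W^*,V)} \le 1/c(t)$ from Lemma \ref{Lax-Milgram}. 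The fixed-$t$ assertions follow verbatim after restricting the first argument.

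I do not anticipate a genuine obstacle: every ingredient has already been prepared in earlier sections, and the argument is essentially a repackaging of Proposition \ref{well-posedness} together with the topological continuity of inversion on $\Is(V)$. The only mildly delicate point is ensuring that $\mathcal U$ is open in $E\times X$ rather than merely in $E\times U$, which is immediate because $U$ is itself open in $X$ by hypothesis.
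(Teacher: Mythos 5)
Your proposal is correct and follows essentially the same route as the paper's proof: invertibility on $E\times\widetilde U$ via the Fredholm-alternative argument of Proposition \ref{well-posedness} b), c), openness of $\mathcal U$ as the preimage of $\Is(V)$ under the continuous map $(t,m)\mapsto I_V+\mathcal C_{t,m}^V$, and the representation $u=(I_V+\mathcal C_{t,m}^V)^{-1}\mathcal T_t^{-1}\varphi$ for part b). The only cosmetic difference is that the paper obtains $\mathcal U_t$ by rerunning the argument with $E$ replaced by $\{t\}$, whereas you define it directly as a preimage; both are equivalent.
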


\begin{proof}
Let $(t,m)\in E\times \widetilde U$ be arbitrary and $u\in V$.
By \eqref{general injectivity assumption} and by part b) and c) of Proposition \ref{well-posedness},
we obtain that
$I_V+\mathcal C_{t,m}^V$ is an isomorphism.
 As
 $\mathcal C_{t,m}^V$ depends continuously on $(t,m)\in E\times U$,
 the function
 \begin{align*}
 F:E\times U\rightarrow\Li(V); \, (t,m)\mapsto I_V+\mathcal C_{t,m}^V
 \end{align*}
 is continuous. Summarizing, $\mathcal U:=F^{-1}(\Is(V))$ is a subset of $E\times U$ open w.r.t. the relative topology on $E\times U$
 containing $E\times\widetilde U$.
 But as $U$ is open in $X$, we deduce that $\mathcal U$ is open in $E\times X$.
 Clearly, $I_V+\mathcal C_{t,m}^V$ as well as its inverse depend continuously on $(t,m)\in\mathcal{U}$.
\\[0,2cm]
Let $\varphi\in W^*$ and $(t,m)\in\mathcal U$ be arbitrary.
 Thanks to Proposition \ref{well-posedness}, problem \eqref{general 2} has now precisely one solution $u\in V$
 given by $u=\big(I_V+\mathcal C_{t,m}^V\big)^{-1}\mathcal T_t^{-1}(\varphi)\in V$.
 Consequently, such a solution necessarily satisfies 
 \begin{align*}
\|u\|_V
&=\big\lVert\big(I_V+\mathcal C_{t,m}^V\big)^{-1}\mathcal T_t^{-1}(\varphi)\big\rVert_V\\
&\leq\big\lVert\big(I_V+\mathcal C_{t,m}^V\big)^{-1}\big\rVert_{\Li(V)}\cdot\|\mathcal T_t^{-1}\|_{\Li(W^*,V)}\|\varphi\|_{W^*}\\
&\leq\frac{1}{c(t)}\big\lVert\big(I_V+\mathcal C_{t,m}^V\big)^{-1}\big\rVert_{\Li(V)}\|\varphi\|_{W^*},
\end{align*}
which shows inequality \eqref{general 3}.
In addition, it is easy to show that $u$ depends continuously on $t$, $m$ and $\varphi$
by using this representation for $u$ (cf. the arguments used to establish Lemma \ref{general Ausgangslemma}).
\\[0,2cm]
Finally, for fixed $t\in E$ we may apply the results shown so far for $E_t:=\{t\}$ instead of $E$ in order to establish the remaining assertions.
\end{proof}

\vspace*{1ex}

\begin{remark}\label{Bedeutung von tilde U}
Observe that in Theorem \ref{global version} the choice $\widetilde U=U$ is possible.
Therefore we obtain global well-posedness, i.e., for all parameter values $(t,m)\in E\times U$ provided that condition \eqref{general injectivity assumption}
is satisfied for $\widetilde U=U$. The conceptual advantage that justifies the introduction of the set $\widetilde U$ in the formulation of Theorem \ref{global version} consists in the fact that it suffices especially to check condition \eqref{general injectivity assumption}
on the set $\widetilde U$ for a fixed $t\in E$, where $\widetilde U$ needs not to be open,
to gain for free well-posedness on a larger set $\mathcal U_t$ open in $X$.
Hence, $\mathcal U_t$ is best suited for differential calculus.
This conclusion plays a vital role in the treatment of some examples, including the inverse problem of THz tomography, in a forthcoming paper.
\end{remark}

\vspace*{1ex}

We now come to the \emph{local version}.

\vspace*{1ex}

\begin{theorem}\label{general local version}
Let
$\mathcal C_{t,m}:H\rightarrow H$ and $\mathcal C_{t,m}^V:V\rightarrow V$ be as before.
Assume that there exists a net $(t_\alpha)_{\alpha\in\mathbb{A}}$  ($\mathbb{A}$ a directed set) in $E$ with
\begin{align}\label{general A2}
\lim_{\alpha\in\mathbb{A}}\frac{M(t_\alpha,m)}{c(t_\alpha)}=0
\end{align}
for all $m\in U$.
Then there exists a non-empty set $\mathcal O\subseteq E\times U$ open in $E\times X$ with the following properties.
\begin{enumerate}
\item For all $m\in U$ there exists a non-empty, open subset $\mathcal O_m\subseteq E$ such that $\mathcal O_m\times\{m\}\subseteq\mathcal O$.
\item The operator $I_H+\mathcal C_{t,m}$ is invertible for all $(t,m)\in\mathcal O$.
\item The operator $I_V+\mathcal C_{t,m}^V$ is invertible as an element of $\Li(V)$ for all $(t,m)\in\mathcal O$
 and both this operator and its inverse depend continuously on $(t,m)\in\mathcal O$.
\item For all $(t,m)\in\mathcal O$ and each antilinear functional $\varphi\in W^*$ there exists a unique $u\in V$ such that
  \begin{align*}
 \forall\,w\in W:\,a_1^{(t)}(u,w)+c_2^{(t,m)}(u,w)=\varphi(w)
 \end{align*}
 and this unique $u$ depends continuously on $t$, $m$ and $\varphi$.
 In addition, we have
 \begin{align}\notag
 \|u\|_V\leq\frac{1}{c(t)}\|(I_V+\mathcal C_{t,m}^V)^{-1}\|_{\Li(V)}\|\varphi\|_{W^*}.
 \end{align}
\end{enumerate} 
\end{theorem}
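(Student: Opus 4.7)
The plan is to exploit the smallness hypothesis \eqref{general A2} in order to make $I_V+\mathcal{C}_{t,m}^V$ invertible by a Neumann series argument at suitably chosen base points $(t_\alpha,m)$, and then to open this pointwise invertibility out to a full neighborhood by using the continuity of $\mathcal{C}^V$ and $\mathcal{C}$ together with the openness of the sets of isomorphisms. The key quantitative input is the bound $\|\mathcal{C}_{t,m}^V\|_{\Li(V)}\le \gamma\, M(t,m)/c(t)$ and, analogously, $\|\mathcal{C}_{t,m}\|_{\Li(H)}\le \gamma\, M(t,m)/c(t)$, both of which follow from part c) of Lemma \ref{general Ausgangslemma} combined with the embedding constant $\gamma$ of $j:V\hookrightarrow H$. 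Hence \eqref{general A2} forces these operator norms to tend to zero along $(t_\alpha)$.

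Concretely I would proceed as follows. Fix any $m\in U$. By \eqref{general A2} one may choose $\alpha\in\mathbb{A}$ with $\gamma\, M(t_\alpha,m)/c(t_\alpha)<1$, so that both $I_V+\mathcal{C}_{t_\alpha,m}^V\in\Is(V)$ and $I_H+\mathcal{C}_{t_\alpha,m}\in\Is(H)$ via the Neumann series. Next define
\begin{align*}
\mathcal{O}:=\{(t,m)\in E\times U:\,I_V+\mathcal{C}_{t,m}^V\in\Is(V)\text{ and }I_H+\mathcal{C}_{t,m}\in\Is(H)\}.
\end{align*}
Since $\mathcal{C}^V:E\times U\to\Li(V)$ and $\mathcal{C}:E\times U\to\Li(H)$ are continuous by Lemma \ref{general Ausgangslemma} and since $\Is(V)$ and $\Is(H)$ are open in $\Li(V)$ and $\Li(H)$ respectively, $\mathcal{O}$ is the intersection of two preimages of open sets, hence open in $E\times U$; as $U$ is open in $X$, $\mathcal{O}$ is open in $E\times X$. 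By the choice of $\alpha$ above, $(t_\alpha,m)\in\mathcal{O}$ for every $m\in U$, so $\mathcal{O}\neq\emptyset$. To get the product structure required in (a), extract from an open product basis of $E\times X$ around $(t_\alpha,m)$ a neighborhood of the form $\mathcal{O}_m\times V_m\subseteq\mathcal{O}$ with $\mathcal{O}_m\subseteq E$ open, $V_m\subseteq X$ open, and $m\in V_m$; then $\mathcal{O}_m\times\{m\}\subseteq\mathcal{O}$.

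The remaining items are then essentially bookkeeping. Assertions (b) and (c) are built directly into the definition of $\mathcal{O}$; the continuous dependence of the inverses on $(t,m)\in\mathcal{O}$ follows from composing the continuous maps $\mathcal{C}^V$ and $\mathcal{C}$ with the continuity of the inversion mappings $\inv:\Is(V)\to\Is(V)$ and $\inv:\Is(H)\to\Is(H)$. For (d), Proposition \ref{well-posedness} part b) applies on $\mathcal{O}$ to give the unique solution $u=(I_V+\mathcal{C}_{t,m}^V)^{-1}\mathcal{T}_t^{-1}\varphi$; using $\|\mathcal{T}_t^{-1}\|_{\Li(W^*,V)}\le 1/c(t)$ from Lemma \ref{Lax-Milgram} immediately yields the norm bound, and continuous dependence on $(t,m,\varphi)$ follows from the continuity of $t\mapsto\mathcal{T}_t^{-1}$, of $(t,m)\mapsto(I_V+\mathcal{C}_{t,m}^V)^{-1}$, and of composition and evaluation. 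The only mildly delicate point is making sure the argument works with a net rather than a sequence and that the product neighborhood $\mathcal{O}_m\times V_m$ can be extracted cleanly from openness in $E\times X$; neither introduces a serious obstacle, since the smallness condition need only be invoked at a single index $\alpha$ for each fixed $m$.
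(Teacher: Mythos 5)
Your proof is correct and follows essentially the same route as the paper: use \eqref{general A2} together with the bound $\|\mathcal C_{t_\alpha,m}\|\le\gamma M(t_\alpha,m)/c(t_\alpha)$ to obtain invertibility at a base point $(t_\alpha,m)$ for each $m$, open this out via the continuity of $\mathcal C$ (resp.\ $\mathcal C^V$) and the openness of the set of isomorphisms, and finish with Proposition \ref{well-posedness}. The only cosmetic difference is that you build the invertibility of $I_V+\mathcal C_{t,m}^V$ directly into the definition of $\mathcal O$ via a second Neumann-series estimate in $\Li(V)$, whereas the paper defines $\mathcal O$ using only $I_H+\mathcal C_{t,m}\in\Is(H)$ and then deduces bijectivity on $V$ from the identity $v=\widetilde v-\mathcal C_{t,m}v$ together with $\mathcal C_{t,m}(H)\subseteq V$.
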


\begin{proof}
Using $\mathcal{C}_{t,m}(H)\subseteq V$ and part  c) of Lemma \ref{general Ausgangslemma}, we derive
\begin{align*}
 \|\mathcal C_{t,m}x\|_H\leq\gamma \|\mathcal C_{t,m}x\|_V\leq\frac{\gamma M(t,m)}{c(t)}\|x\|_H,
\end{align*}
 which implies
 \begin{align*}
 \|\mathcal C_{t,m}\|_{\Li(H)}\leq\frac{\gamma M(t,m)}{c(t)}.
 \end{align*}
Employing hypothesis \eqref{general A2}, we derive
\begin{align*}
\|\mathcal C_{t_\alpha,m}\|_{\Li(H)}\leq \frac{\gamma M(t_\alpha,m)}{c(t_\alpha)}
\xrightarrow[\alpha\in\mathbb A]{}0
\end{align*}
for all $m\in U$,
which yields
\begin{align*}
I_H+\mathcal C_{t_\alpha,m}\xrightarrow[\alpha\in\mathbb{A}]{}I_H\in\Is(H).
\end{align*}
Since $\Is(H)$ is an open subset of $\Li(H)$ and $\mathcal C:E\times U\rightarrow\Li(H);\, (t,m)\mapsto\mathcal{C}_{t,m}$ is continuous,
we deduce that for fixed $m\in U$ the set
\begin{align*}
\mathcal O_m:=\{t\in E:\,I_H+\mathcal C_{t,m}\in\Is(H)\}
\end{align*}
is a non-empty open subset of $E$ as well as that the set
\begin{align*}
\mathcal O:=\{(t,m)\in E\times U:\,I_H+\mathcal C_{t,m}\in\Is(H)\}
\end{align*}
is non-empty and open w.r.t. the relative topology on $E\times U$.
As $U$ is open in $X$, we infer that $\mathcal{O}$ is an open subset of $E\times X$.
Clearly, $\mathcal O_m\times\{m\}\subseteq\mathcal O$ for all $m\in U$.
This shows part a) and b).

The remaining assertions can now be deduced essentially as in the proof of Theorem \ref{general global version} as soon as we will have shown that
$I_V+\mathcal C_{t,m}^V$ is invertible for all $(t,m)\in\mathcal O$.
By part b), the operator $I_V+\mathcal C_{t,m}^V$ is injective for $(t,m)\in\mathcal O$. Let $\widetilde v\in V$ be arbitrary.
Thanks to part b), there exists a $v\in H$ such that $(I_H+\mathcal C_{t,m})(v)=\widetilde v$.
This last equality is equivalent to $v=\widetilde v-\mathcal C_{t,m}v$ and we infer that $v\in V$ because of $\mathcal C_{t,m}(H)\subseteq V$.
As a result, $I_V+\lambda(t)\mathcal C_{t,m}^V$ is also surjective, thus continuously invertible by the open mapping theorem. 
\end{proof}

\vspace*{1ex}

\begin{remark}
As we said before, one may interpret problem \eqref{general 2} resp. \eqref{2} as the weak formulation of an elliptic boundary value problem, where $W$ serves as a space of test functions. 
Roughly speaking, the lower order terms of the corresponding differential operator are encoded in the form $c^{(t,m)}$ and they depend on the parameter $m$,
while $a_1^{(t)}$ essentially describes the highest order terms.
Consequently, assumption \eqref{general A2} in the local well-posedness result is a kind of smallness condition w.r.t. the highest order terms imposed on the lower order terms of the involved differential operator.
\end{remark}

\vspace*{1ex}

At the end of this section, we want to briefly discuss the case we are particularly interested in, namely
\begin{align*}
\mathfrak{c}(t,m)=\lambda(t)\mathfrak{a}_2(m)
\end{align*}
for all $t\in E$ and $m\in U$. In that case we obtain, following the same line of argument as above, the subsequent slightly more precise versions of the previous results.

\vspace*{1ex}

\begin{lemma}\label{Ausgangslemma}
For each pair $(t,m)\in E\times U$ there exists a unique bounded operator
$\mathcal A_{t,m}:H\rightarrow H$ with $\mathcal A_{t,m}(H)\subseteq V$ and with
 \begin{align}\label{1}
 a_1^{(t)}(\mathcal A_{t,m}x,w)=a_2^{(m)}(x,w)
 \end{align}
 for every $x\in H$ and each $w\in W$.
 In addition, the following assertions are valid.
\begin{enumerate}
\item The mapping $\mathcal A:E\times U\rightarrow\Li(H), \ (t,m)\mapsto\mathcal A_{t,m}$ is continuous.
\item The part of $\mathcal A_{t,m}$ in $V$, i.e., the linear operator
 \begin{align*}
 \mathcal A_{t,m}^V:V\rightarrow V, \ v\mapsto\mathcal A_{t,m}v
 \end{align*}
 is bounded and the mapping $\mathcal A^V:E\times U\rightarrow\Li(V), \ (t,m)\mapsto\mathcal C_{t,m}^V$ is continuous.
\item We have $\|\mathcal A_{t,m}x\|_V\leq\frac{M(m)}{c(t)}\cdot\|x\|_H$ for each $x\in H$.
\item The operators $\mathcal A_{t,m}$ and $\mathcal{A}_{t,m}^V$ are both compact if the embedding $j:V\rightarrow H$ is compact.
\end{enumerate}
\end{lemma}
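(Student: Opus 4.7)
The plan is to deduce this lemma as a direct corollary of Lemma \ref{general Ausgangslemma}, avoiding any repetition of the argument. Define the auxiliary mapping
\[
\tilde{\mathfrak c}:E\times U\rightarrow\mathcal{S}(H\times W,\mathbb K),\quad (t,m)\mapsto\mathfrak a_2(m),
\]
so that $\tilde c^{(t,m)}(x,w)=a_2^{(m)}(x,w)$ for all $x\in H$ and $w\in W$. Since $\tilde{\mathfrak c}$ factors as $\mathfrak a_2$ composed with the continuous projection $E\times U\to U$, and since $\mathfrak a_2$ is continuous by assumption, $\tilde{\mathfrak c}$ is itself continuous when $E\times U$ is endowed with the product topology. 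Moreover, the constant $M(m)$ from \eqref{boundedness_a2} satisfies $M(m)\geq\|\tilde{\mathfrak c}(t,m)\|_{\mathcal{S}(H\times W,\mathbb K)}$ for every $t\in E$, so it plays the role of the constant $M(t,m)$ from \eqref{boundedness_c} for $\tilde{\mathfrak c}$.

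With this identification in place, I would invoke Lemma \ref{general Ausgangslemma} applied to $\tilde{\mathfrak c}$ in the slot formerly occupied by $\mathfrak c$. The lemma produces, for each pair $(t,m)\in E\times U$, a unique bounded operator $\mathcal A_{t,m}:H\to H$ with $\mathcal A_{t,m}(H)\subseteq V$ and with
\[
a_1^{(t)}(\mathcal A_{t,m}x,w)=\tilde c^{(t,m)}(x,w)=a_2^{(m)}(x,w)
\]
for every $x\in H$ and $w\in W$, which is precisely the defining relation \eqref{1}. Item a) follows from the continuity assertion of Lemma \ref{general Ausgangslemma} a), and item b) analogously from Lemma \ref{general Ausgangslemma} b). The norm bound in part c) becomes $\|\mathcal A_{t,m}x\|_V\leq\frac{M(t,m)}{c(t)}\|x\|_H$, and since here $M(t,m)$ can be chosen as $M(m)$, we recover the claimed estimate $\|\mathcal A_{t,m}x\|_V\leq\frac{M(m)}{c(t)}\|x\|_H$. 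Finally, part d) carries over verbatim: whenever $j:V\to H$ is compact, the representation $\mathcal A_{t,m}=j\,\mathcal T_t^{-1}\mathcal B_{t,m}$ from the proof of Lemma \ref{general Ausgangslemma} exhibits $\mathcal A_{t,m}$ and $\mathcal A_{t,m}^V=\mathcal T_t^{-1}\mathcal B_{t,m}\,j$ as products of a compact operator with bounded operators, hence both are compact.

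There is no genuine obstacle here; the only point requiring care is the bookkeeping that the dominating constant for $\tilde{\mathfrak c}$ can indeed be taken independent of $t$, which is exactly the content of \eqref{boundedness_a2}, so all four assertions of Lemma \ref{general Ausgangslemma} transfer in the stated sharpened form.
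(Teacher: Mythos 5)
Your reduction is correct and matches the paper's intent: the paper gives no separate proof of Lemma \ref{Ausgangslemma}, stating only that it follows ``by the same line of argument'' as Lemma \ref{general Ausgangslemma}, and your specialization $\tilde{\mathfrak c}(t,m):=\mathfrak a_2(m)$ (continuous as $\mathfrak a_2$ composed with the projection, with the $t$-independent bound $M(m)$ playing the role of $M(t,m)$) is exactly the clean way to make that remark precise. All four assertions, including the sharpened estimate in part c), transfer as you describe.
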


\vspace*{1ex}

\begin{theorem}\label{global version}
Let
$\mathcal A_{t,m}:H\rightarrow H$ be the operator considered in Lemma \ref{Ausgangslemma}.
We further consider the part of it in $V$.
Assume that the inclusion $V\subseteq H$ is compact and that for all $(t,m)\in E\times \widetilde U$ and all $u\in V$ the implication 
 \begin{align}\label{injectivity assumption}
\bigg(\forall\,w\in W:\,a_1^{(t)}(u,w)+\lambda(t) a_2^{(m)}(u,w)=0\bigg) \ \Longrightarrow \ u=0
 \end{align}
 is valid, where $\widetilde U$ is a non-empty subset of $U$. 
Then the following claims hold.
\begin{enumerate}
\item There exists a set $\mathcal U\subseteq E\times U$ open in $E\times X$ and
containing $E\times\widetilde U$ such that $I_V+\lambda(t)\mathcal A_{t,m}^V$ is invertible for all $(t,m)\in \mathcal U$ and its inverse depends continuously on $(t,m)\in\mathcal U$.
Furthermore, for each $t\in E$ there exists a set $\mathcal{U}_t\subseteq U$ open in $X$ and 
containing $\widetilde U$ such that
$I_V+\lambda(t)\mathcal A_{t,m}^V$ is invertible for all $m\in \mathcal U_t$.
\item For each antilinear functional $\varphi\in W^*$ and each pair $(t,m)\in\mathcal U$ there exists a unique $u\in V$ such that
  \begin{align*}
 \forall\,w\in W:\,a_1^{(t)}(u,w)+\lambda(t) a_2^{(m)}(u,w)=\varphi(w)
 \end{align*}
 and this unique $u$ depends continuously on $t$, $m$, and $\varphi$.
In addition, we have
 \begin{align}\label{3}
 \|u\|_V\leq\frac{1}{c(t)}\|(I_V+\lambda(t)\mathcal A_{t,m}^V)^{-1}\|_{\Li(V)}\|\varphi\|_{W^*}.
 \end{align}
The analogous conclusions are valid for fixed $t\in E$ and $m\in\mathcal{U}_t$.
\end{enumerate}
\end{theorem}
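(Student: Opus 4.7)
The plan is to derive Theorem \ref{global version} as an immediate specialization of Theorem \ref{general global version} to the case $\mathfrak{c}(t,m)=\lambda(t)\mathfrak{a}_2(m)$. The key observation is that, under this specialization, the operator $\mathcal{C}_{t,m}$ produced by Lemma \ref{general Ausgangslemma} coincides with $\lambda(t)\mathcal{A}_{t,m}$, where $\mathcal{A}_{t,m}$ is the operator from Lemma \ref{Ausgangslemma}.

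First I would verify this identification: both $\mathcal{C}_{t,m}$ and $\lambda(t)\mathcal{A}_{t,m}$ are elements of $\Li(H)$ with range contained in $V$, and both satisfy
\begin{align*}
a_1^{(t)}(Tx,w)=\lambda(t)\,a_2^{(m)}(x,w)=c^{(t,m)}(x,w)
\end{align*}
for all $x\in H$ and $w\in W$. The uniqueness assertion of Lemma \ref{general Ausgangslemma} then forces $\mathcal{C}_{t,m}=\lambda(t)\mathcal{A}_{t,m}$, and consequently $\mathcal{C}_{t,m}^V=\lambda(t)\mathcal{A}_{t,m}^V$. Under this identification the hypothesis \eqref{injectivity assumption} is literally the hypothesis \eqref{general injectivity assumption} of Theorem \ref{general global version}.

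Next I would simply invoke Theorem \ref{general global version}. It produces an open set $\mathcal{U}\subseteq E\times X$ containing $E\times\widetilde U$ on which $I_V+\mathcal{C}_{t,m}^V=I_V+\lambda(t)\mathcal{A}_{t,m}^V$ is invertible with continuously varying inverse; this is exactly assertion a). For assertion b), the general theorem yields existence and uniqueness of $u\in V$ solving the variational problem, together with continuous dependence on $t$, $m$, and $\varphi$, and the bound \eqref{general 3} translates verbatim into \eqref{3} once $\mathcal{C}_{t,m}^V$ is replaced by $\lambda(t)\mathcal{A}_{t,m}^V$. The analogous statement for fixed $t\in E$ and $m\in\mathcal{U}_t$ is transferred in the same way.

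The only place where any care is required is to check that the continuity statements from the general framework apply in the specialized form; continuity of $(t,m)\mapsto\lambda(t)\mathcal{A}_{t,m}^V$ follows from the assumed continuity of $\lambda:E\to\mathbb{K}$ together with the continuity of $\mathcal{A}^V$ recorded in part a) of Lemma \ref{Ausgangslemma}, and this passes through scalar multiplication without effort. There is no substantive obstacle here — the proof is essentially a translation of notation, and the main conceptual point (the identification $\mathcal{C}_{t,m}=\lambda(t)\mathcal{A}_{t,m}$ via uniqueness) is a single line.
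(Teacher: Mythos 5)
Your proposal is correct and matches the paper's intent: the paper itself offers no separate proof of Theorem \ref{global version} but states that it is obtained ``following the same line of argument'' as Theorem \ref{general global version} in the special case $\mathfrak{c}(t,m)=\lambda(t)\mathfrak{a}_2(m)$, and your identification $\mathcal{C}_{t,m}=\lambda(t)\mathcal{A}_{t,m}$ via the uniqueness clause of Lemma \ref{general Ausgangslemma} is the clean way to make that reduction precise.
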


\vspace*{1ex}

\begin{theorem}\label{local version}
Let
$\mathcal A_{t,m}:H\rightarrow H$ and $\mathcal A_{t,m}^V:V\rightarrow V$ be as before.
Assume that there exists a net $(t_\alpha)_{\alpha\in\mathbb{A}}$  ($\mathbb{A}$ a directed set) in $E$ with
\begin{align}\label{A2}
\lim_{\alpha\in\mathbb{A}}\frac{\lambda(t_\alpha)}{c(t_\alpha)}=0.
\end{align}
Then there exists a non-empty set $\mathcal O\subseteq E\times U$ open in $E\times X$ with the following properties.
\begin{enumerate}
\item For all $m\in U$ there exists a non-empty, open set $\mathcal O_m\subseteq E$ such that $\mathcal O_m\times\{m\}\subseteq\mathcal O$.
\item The operator $I_H+\lambda(t)\mathcal A_{t,m}$ is invertible for all $(t,m)\in\mathcal O$.
\item The operator $I_V+\lambda(t)\mathcal A_{t,m}^V$ is invertible as an element of $\Li(V)$ for all $(t,m)\in\mathcal O$
 and both this operator and its inverse depend continuously on $(t,m)\in\mathcal O$.
\item For all $(t,m)\in\mathcal O$ and each antilinear functional $\varphi\in W^*$ there exists a unique $u\in V$ such that
  \begin{align*}
 \forall\,w\in W:\,a_1^{(t)}(u,w)+\lambda(t) a_2^{(m)}(u,w)=\varphi(w)
 \end{align*}
 and this unique $u$ depends continuously on $t$, $m$ and $\varphi$.
 In addition, we have
 \begin{align}\notag
 \|u\|_V\leq\frac{1}{c(t)}\|(I_V+\lambda(t)\mathcal A_{t,m}^V)^{-1}\|_{\Li(V)}\|\varphi\|_{W^*}.
 \end{align}
\end{enumerate} 
\end{theorem}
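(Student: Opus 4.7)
The plan is to deduce Theorem \ref{local version} directly from Theorem \ref{general local version} by specialising to $\mathfrak{c}(t,m) = \lambda(t)\mathfrak{a}_2(m)$. The conceptual work is twofold: first, I would identify the abstract operator $\mathcal{C}_{t,m}$ from Lemma \ref{general Ausgangslemma} with $\lambda(t)\mathcal{A}_{t,m}$ from Lemma \ref{Ausgangslemma}; second, I would verify that hypothesis \eqref{A2} is a legitimate instance of \eqref{general A2}.

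For the identification, note that $\lambda(t)\mathcal{A}_{t,m}$ is a bounded operator on $H$ whose range lies in $V$, and by the linearity of $a_1^{(t)}$ in its first argument
\[
a_1^{(t)}(\lambda(t)\mathcal{A}_{t,m}x, w) = \lambda(t)\,a_1^{(t)}(\mathcal{A}_{t,m}x, w) = \lambda(t)\,a_2^{(m)}(x,w) = c^{(t,m)}(x,w)
\]
for every $x\in H$ and $w\in W$. The uniqueness clause of Lemma \ref{general Ausgangslemma} then forces $\mathcal{C}_{t,m} = \lambda(t)\mathcal{A}_{t,m}$ and, restricting to $V$, likewise $\mathcal{C}_{t,m}^V = \lambda(t)\mathcal{A}_{t,m}^V$, so that $I_H+\mathcal{C}_{t,m}$ and $I_V+\mathcal{C}_{t,m}^V$ coincide with the operators appearing in the present statement.

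Next, setting $M(t,m) := |\lambda(t)|\,M(m)$ yields a valid bound for $\|\mathfrak{c}(t,m)\|_{\mathcal{S}(H\times W,\mathbb{K})}$, and for each fixed $m\in U$,
\[
\frac{M(t_\alpha,m)}{c(t_\alpha)} \,=\, M(m)\cdot\frac{|\lambda(t_\alpha)|}{c(t_\alpha)} \,\xrightarrow[\alpha\in\mathbb{A}]{}\, 0
\]
by \eqref{A2}, so \eqref{general A2} holds. Theorem \ref{general local version} then produces the open set $\mathcal{O}\subseteq E\times U$ with non-empty open fibres $\mathcal{O}_m\subseteq E$ enjoying all the stated invertibility, continuity, and well-posedness properties; reading those conclusions back through the identifications above gives exactly (a)--(d) of the present theorem, including the a priori bound on $\|u\|_V$.

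No step constitutes a real obstacle; the only details that require care are the sesquilinearity (the scalar $\lambda(t)$ factors cleanly out of the first slot of $a_1^{(t)}$, since antilinearity lives only in the second argument) and the finiteness of $M(m)$ for each fixed $m\in U$, which is what allows the $m$-free condition \eqref{A2} to imply the $m$-dependent condition \eqref{general A2}.
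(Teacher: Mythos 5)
Your proposal is correct and matches the paper's intent: the paper gives no separate proof of Theorem \ref{local version}, stating only that it follows ``by the same line of argument'' as Theorem \ref{general local version}, and your reduction via the uniqueness clause of Lemma \ref{general Ausgangslemma} (yielding $\mathcal{C}_{t,m}=\lambda(t)\mathcal{A}_{t,m}$) together with the choice $M(t,m)=|\lambda(t)|M(m)$ is exactly the rigorous instantiation of that remark. The two details you single out --- the scalar factoring out of the first (linear) slot of $a_1^{(t)}$, and the passage from \eqref{A2} to \eqref{general A2} for each fixed $m$ --- are indeed the only points needing care, and you handle both correctly.
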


\section{Inverse problems} \label{inverse problem}

Assuming the well-posedness of problem \eqref{general 2}, we will now explore the analytic properties of various parameter-to-state operators.

\subsection{Inverse problem with respect to the parameter $m$}
 We first consider the inverse problem with respect to the parameter $m$.

\vspace*{1ex}  
 
 \begin{theorem}\label{regularity}
 Let $\nu\in\N\cup\{\infty\}$, $t\in E$ be fixed, and $\mathcal{G}_t$ a non-empty, open subset of $U$.
 Assume that problem \eqref{general 2} is strongly well-posed for all $m\in \mathcal{G}_t$. We further consider a mapping
 \begin{align*}
 \Phi:\mathcal{G}_t\rightarrow W^*;\,m\mapsto\varphi_m
 \end{align*}
 as well as the parameter-to-state operator
 \begin{align*}
 S:\mathcal{G}_t\rightarrow V;\, m\mapsto u_{m},
 \end{align*}
 where $u_m=u_{t,m,\varphi_m}$ is the unique solution $u\in V$ of the problem
 \begin{align*}
  \forall\,w\in W:\,a_1^{(t)}(u,w)+c^{(t,m)}(u,w)=\varphi_m(w)=\langle w,\Phi(m)\rangle.
 \end{align*}
 \begin{enumerate}
  \item If $\Phi$ and $\mathfrak{c}_t:=\mathfrak{c}(t,\cdot)$ are both $\nu$-times (continuously) Fr\'echet-differentiable on $\mathcal{G}_t$, then $S$ is also $\nu$-times (continuously)
  Fr\'echet-differentiable on $\mathcal{G}_t$.
  \item If $\Phi$ and $\mathfrak{c}(t,\cdot)$ are both analytic on $\mathcal{G}_t$ (in the sense that they are locally given by their respective Taylor series expansion, see \cite{Whittlesey}), then $S$ is also analytic on $\mathcal{G}_t$.
 \end{enumerate}
 \end{theorem}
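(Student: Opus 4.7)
The plan is to exploit the explicit representation
\[ S(m)=(I_V+\mathcal{C}_{t,m}^V)^{-1}\mathcal{T}_t^{-1}\Phi(m), \]
which is valid on $\mathcal{G}_t$ by parts a) and b) of Proposition \ref{well-posedness} under the strong well-posedness hypothesis. Since $t$ is fixed throughout, both $\mathcal{T}_t$ and $\mathcal{T}_t^{-1}$ are constants, and the regularity question for $S$ reduces to controlling the two factors $m\mapsto(I_V+\mathcal{C}_{t,m}^V)^{-1}$ and $m\mapsto\mathcal{T}_t^{-1}\Phi(m)$ together with a bilinear evaluation.

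The first step is to show that $m\mapsto \mathcal{C}_{t,m}^V$ inherits the regularity of $\mathfrak{c}_t$. In the proof of Lemma \ref{general Ausgangslemma} one already has the factorisation $\mathcal{C}_{t,m}^V=\mathcal{T}_t^{-1}\mathcal{B}_{t,m}j$, where $\mathcal{B}_{t,m}=c^{(t,m)}(\cdot,\cdot)$ arises from $c^{(t,m)}$ through the canonical topological isomorphism $\kappa\colon\mathcal{S}(H\times W,\mathbb K)\to\Li(H,W^*)$. Hence $m\mapsto\mathcal{C}_{t,m}^V$ is the postcomposition of $\mathfrak{c}_t$ with two fixed continuous linear maps. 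Because continuous linear maps between Banach spaces are $C^\infty$ and analytic, this postcomposition transfers both $\nu$-times (continuous) Fr\'echet differentiability and analyticity from $\mathfrak{c}_t$ to $m\mapsto I_V+\mathcal{C}_{t,m}^V$.

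Next, the inversion map $\inv\colon\Is(V)\to\Is(V),\ T\mapsto T^{-1}$ is analytic: the local Neumann series $(T_0+\Delta)^{-1}=\sum_{n\geq 0}(-T_0^{-1}\Delta)^n T_0^{-1}$ provides a convergent power series expansion about each $T_0\in\Is(V)$, and in particular $\inv$ is $C^\infty$. Strong well-posedness guarantees that $I_V+\mathcal{C}_{t,m}^V$ takes values in $\Is(V)$ throughout $\mathcal{G}_t$, so the chain rule (for part a)) respectively the composition theorem for analytic maps in the sense of \cite{Whittlesey} (for part b)) yields that $m\mapsto(I_V+\mathcal{C}_{t,m}^V)^{-1}$ has the same regularity as $m\mapsto I_V+\mathcal{C}_{t,m}^V$. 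By the same token, $m\mapsto\mathcal{T}_t^{-1}\Phi(m)$ inherits the regularity of $\Phi$ as the postcomposition with the fixed bounded operator $\mathcal{T}_t^{-1}$.

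Finally, $S$ factors as the composition of $m\mapsto\bigl((I_V+\mathcal{C}_{t,m}^V)^{-1},\mathcal{T}_t^{-1}\Phi(m)\bigr)$ with the continuous bilinear evaluation $\Li(V)\times V\to V,\ (T,v)\mapsto Tv$. Since bounded bilinear maps between Banach spaces are both $C^\infty$ and analytic, one last application of the chain rule finishes both a) and b). The main obstacle is essentially bookkeeping: the only non-routine point is invoking the correct composition and inversion stability of the analyticity notion from \cite{Whittlesey}, and tracking how differentiability in $\mathcal{S}(H\times W,\mathbb K)$ transfers, via $\kappa$, through $\Li(H,W^*)$ into $\Li(V)$; these facts are standard in Banach-space analysis but they structure the whole argument.
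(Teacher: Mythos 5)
Your proposal is correct and follows essentially the same route as the paper: both start from the representation $S(m)=(I_V+\mathcal{T}_t^{-1}\mathcal{B}_{t,m}j)^{-1}\mathcal{T}_t^{-1}\Phi(m)$, factor the map $m\mapsto I_V+\mathcal{C}_{t,m}^V$ through the canonical isometric isomorphism $\mathcal{S}(H\times W,\mathbb K)\to\Li(V,W^*)$ composed with fixed bounded linear operators, and then invoke the analyticity of inversion on $\Is(V)$ and of the bounded bilinear evaluation together with the chain rule. The only cosmetic difference is that you justify the analyticity of $\inv_V$ directly via the Neumann series where the paper cites the literature.
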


\vspace*{1ex} 

\begin{proof}
Using part a) of Proposition \ref{well-posedness} and the construction of $\mathcal{C}_{t,m}^V$, we see that
\begin{align}\label{representation solution-to-parameter operator 2}
S(m)=(I_V+\mathcal{C}_{t,m}^V)^{-1}\mathcal{T}_t^{-1}\Phi(m)
=(I_V+\mathcal{T}_t^{-1}\mathcal{B}_{t,m}j)^{-1}\mathcal{T}_t^{-1}\Phi(m)
\end{align}
for all $m\in\mathcal{G}_t$, where
\begin{align*}
\mathcal B_{t,m}:H\rightarrow W^*, \ x\mapsto c^{(t,m)}(x,\,\cdot\,).
\end{align*}
It is well-known and easy to check that the operator
\begin{align*}
\Xi:\mathcal{S}(H\times W,\mathbb K)\rightarrow\Li(H,W^*);\,\mathfrak{d}\mapsto\Xi(\mathfrak{d}),
\end{align*}
where
$\Xi(\mathfrak{d})\lbrack x\rbrack=\mathfrak{d}(x,\cdot)$
for $x\in H$, is a well-defined isometric isomorphism.
We further consider the following bounded, linear operators
\begin{align*}
L_{\mathcal{T}_t^{-1}}:\Li(H,W^*)\rightarrow\Li(H,V);\,T\mapsto\mathcal{T}_t^{-1}T,
\end{align*}
\begin{align*}
R_{\mathcal{T}_t^{-1}}:\Li(V)\rightarrow\Li(W^*,V);\,T\mapsto T\mathcal{T}_t^{-1},
\end{align*}
and
\begin{align*}
R_j:\Li(H,V)\rightarrow\Li(V);\, T\mapsto Tj
\end{align*}
as well as the continuous function
\begin{align*}
\mathfrak{c}_t=\mathfrak{c}(t,\cdot):\mathcal{G}_t\rightarrow\mathcal{S}(H\times W,\mathbb{K});\,m\mapsto c^{(t,m)}=\mathfrak{c}(t,m),
\end{align*}
the translation
\begin{align*}
\tau:\Li(V)\rightarrow\Li(V);\, T\mapsto I_V+T,
\end{align*}
the bounded, bilinear mapping
\begin{align*}
\mathfrak{b}:\Li(W^*,V)\times W^*\rightarrow V;\,(T,\varphi)\mapsto T\varphi,
\end{align*}
and the inversion
\begin{align*}
\inv_{V}:\Is(V)\rightarrow\Is(V);\, T\mapsto T^{-1}.
\end{align*}
We put
\begin{align*}
\widetilde S:=R_{\mathcal{T}_t^{-1}}\circ\inv_V\circ\tau\circ R_j\circ L_{\mathcal{T}_t^{-1}}\circ\Xi\circ\mathfrak{c}_t:\mathcal{G}_t\rightarrow\Li(W^*,V)
\end{align*}
and claim that
\begin{align}\label{representation parameter-to-state operator}
S(m)=\mathfrak{b}(\widetilde S(m),\Phi(m))
\end{align}
for all $m\in\mathcal{G}_t$.
Since bounded (multi)linear operators, translations as well as the inversion of isomorphisms (see \cite[p. 1080]{Whittlesey}) are analytic functions,
the chain rule (see \cite[p. 1079]{Whittlesey} and \cite[Theorem VII.5.7]{AmannEscherII}) gives us the assertions as soon as we will have shown \eqref{representation parameter-to-state operator}.
Take $m\in\mathcal{G}_t$. By definition,
\begin{align}\label{****}
(\Xi\circ\mathfrak{c}_t)(m)x=c^{(t,m)}(x,\cdot)=\mathcal{B}_{t,m}(x)
\end{align}
for all $x\in H$, i.e., $(\Xi\circ\mathfrak{c}_t)(m)=\mathcal{B}_{t,m}=\mathcal{B}_t(m)$, where
\begin{align*}
\mathcal{B}_t:\mathcal{G}_t\rightarrow\Li(H,W^*);\,m\mapsto\mathcal{B}_{t,m}.
\end{align*}
As a result, we infer
\begin{align}\label{*}
\widetilde S(m)
=R_{\mathcal{T}_t^{-1}}(\inv_V(\tau(R_j(L_{\mathcal{T}_t^{-1}}(\mathcal{B}_{t,m})))))=(I_V+\mathcal{T}_t^{-1}\mathcal{B}_{t,m}j)^{-1}\mathcal{T}_t^{-1},
\end{align}
which finally yields
\begin{align*}
\mathfrak{b}(\widetilde S(m),\Phi(m))=(I_V+\mathcal{T}_t^{-1}\mathcal{B}_{t,m}j)^{-1}\mathcal{T}_t^{-1}\Phi(m)=S(m)
\end{align*}
due to \eqref{representation solution-to-parameter operator 2}.
\end{proof}

\vspace*{1ex}

\begin{remark}
One might ask whether or not it is necessary to assume that $\mathfrak{c}_t$ and $\Phi$ are Fr\'echet-differentiable in order to make sure that the considered parameter-to-state operator is Fr\'echet-differentiable. In general this is not the case. 
Indeed, if, for instance, $\Phi$ is identical zero, $S=0$ will trivially be Fr\'echet-differentiable, independently of the differentiability properties of
$\mathfrak{c}_t$.
However, formula \eqref{*} reveals that $\widetilde S$ is $\nu$-times (continuously) Fr\'echet-differentiable resp. analytic if and only if this holds for the mapping
\begin{align*}
\mathcal{G}_t\rightarrow\Li(V,W^*);\, m\mapsto\mathcal{B}_t(m)j.
\end{align*}
Moreover, note that if both $\mathfrak{c}_t$ and $S$ are Fr\'echet-differentiable, it easily follows from
\eqref{representation solution-to-parameter operator 2} and \eqref{****}
that $\Phi$ must be Fr\'echet-differentiable, too.
\end{remark}

\vspace*{1ex}

Assume that the hypotheses from Theorem \ref{regularity} hold.
Using the representation \eqref{representation parameter-to-state operator} we may compute the Fr\'echet-derivative of the parameter-to-state operator $S$. For that purpose, recall (see, e.g., \cite[Satz VII.7.2]{AmannEscherII}) that
\begin{align*}
\mathrm D_{\mathcal F}\inv_V:\Is(V)\rightarrow\Li(\Li(V));\, T\mapsto -L_{T^{-1}}\circ R_{T^{-1}},
\end{align*}
where
\begin{align*}
R_{T^{-1}}:\Li(V)\rightarrow\Li(V);\, \Psi\mapsto\Psi T^{-1}
\end{align*}
and
\begin{align*}
L_{T^{-1}}:\Li(V)\rightarrow\Li(V);\, \Psi\mapsto T^{-1}\Psi. 
\end{align*}
Thus, we obtain, using the chain rule,
\begin{align*}
\mathrm D_{\mathcal F}\widetilde S(m)
&=R_{\mathcal{T}_t^{-1}}(\mathrm D_{\mathcal F}\inv_V)\Big((\tau\circ R_j\circ L_{\mathcal{T}_t^{-1}}\circ\Xi\circ\mathfrak{c}_t)(m))\Big)
R_jL_{\mathcal{T}_t^{-1}}(\mathrm D_{\mathcal F}\mathcal B_t)(m)
\end{align*}
and hence
\begin{align*}
\mathrm D_{\mathcal F}\widetilde S(m)\lbrack\widetilde m\rbrack
&=R_{\mathcal{T}_t^{-1}}(\mathrm D_{\mathcal F}\inv_V)\Big\lbrack I_V+\mathcal{T}_t^{-1}\mathcal{B}_{t,m}j\Big\rbrack
\Big(\mathcal{T}_t^{-1}((\mathrm D_{\mathcal F}\mathcal B_t)(m)\lbrack\widetilde m\rbrack)j\Big)\\
&=-\Big(I_V+\mathcal{T}_t^{-1}\mathcal{B}_{t,m}j\Big)^{-1}
\mathcal{T}_t^{-1}\big((\mathrm D_{\mathcal F}\mathcal B_t)(m)\lbrack\widetilde m\rbrack\big)j
\Big(I_V+\mathcal{T}_t^{-1}\mathcal{B}_{t,m}j\Big)^{-1}
\mathcal{T}_t^{-1}
\end{align*}
for all $m\in\mathcal{G}_t$ and all $\widetilde m\in X$.

In order to proceed, we need the subsequent product rule for the Fr\'echet-derivative, which can be easily derived:
\\[0,2cm]  
Let $X_0$, $X_1$, $X_2$, and $X_3$ be Banach spaces, $\Omega\subseteq X_0$ open and non-empty, $x_0\in\Omega$,
$f:\Omega\rightarrow X_1$ and $g:\Omega\rightarrow X_2$ functions, which are Fr\'echet-differentiable at $x_0$, 
and $\beta:X_1\times X_2\rightarrow X_3$ a bounded bilinear mapping.
Then the function
 \begin{align*}
 \beta(f,h):\Omega\rightarrow X_3;\, x\mapsto \beta(f(x),g(x))
 \end{align*}
 is Fr\'echet-differentiable at $x_0$ with
 \begin{align*}
 \mathrm D_{\mathcal{F}}(\beta(f,h))(x_0)\xi
 =\beta((\mathrm D_{\mathcal{F}}f)(x_0)\xi,g(x_0))+\big(\beta(f(x_0),(\mathrm D_{\mathcal{F}}g)(x_0)\xi)\big)
 \end{align*}
for all $\xi\in X_0$.
\\[0,2cm]  
Employing this product rule and \eqref{representation solution-to-parameter operator 2}, we calculate
\begin{align*}
\mathrm D_{\mathcal F}S(m)\lbrack\widetilde m\rbrack
&=\mathfrak{b}(\mathrm D_{\mathcal F}\widetilde S(m)\lbrack\widetilde m\rbrack,\Phi(m))
+\mathfrak{b}(\widetilde S(m),\mathrm D_{\mathcal F}\Phi(m)\lbrack\widetilde m\rbrack)\\
&=-(I_V+\mathcal{T}_t^{-1}\mathcal{B}_{t,m}j)^{-1}
\mathcal{T}_t^{-1}\big((\mathrm D_{\mathcal F}\mathcal B_t)(m)\lbrack\widetilde m\rbrack\big)S(m)
+\widetilde S(m)\big(\mathrm D_{\mathcal F}\Phi(m)\lbrack\widetilde m\rbrack\big)
\end{align*}
for  $m\in\mathcal{G}_t$ and  $\widetilde m\in X$.
As problem \eqref{general 2} is, by assumption, strongly well-posed for $m\in\mathcal{G}_t$, we may restate this result, using part a) of Proposition \ref{well-posedness} as well as \eqref{*}, as follows: 
$\mathrm D_{\mathcal F}S(m)\lbrack\widetilde m\rbrack$ is the unique element $u\in V$ such that
\begin{align}\label{derivative parameter-to-state operator}
a_1^{(t)}(u,w)+c^{(t,m)}(u,w)
=\Big\langle w,\mathrm D_{\mathcal F}\Phi(m)\lbrack\widetilde m\rbrack-\big((\mathrm D_{\mathcal F}\mathcal B_t)(m)\lbrack\widetilde m\rbrack\big)S(m)\Big\rangle
\end{align}
for all $w\in W$. This result has the following remarkable consequence.

\vspace*{1ex}  
 
 \begin{theorem}\label{tcc for general S}
 Let $t\in E$ be fixed and $\mathcal{G}_t$ a non-empty, open subset of $U$.
 Assume that problem \eqref{general 2} is strongly well-posed for all $m\in \mathcal{G}_t$. We further consider a continuous affine linear mapping
 \begin{align*}
 \Phi:X\rightarrow W^*;\,m\mapsto\varphi_m
 \end{align*}
 as well as the parameter-to-state operator
 \begin{align*}
 S:\mathcal{G}_t\rightarrow V;\, m\mapsto u_{m},
 \end{align*}
 where $u_m=u_{t,m,\varphi_m}$ is the unique solution $u\in V$ of the problem
 \begin{align*}
  \forall\,w\in W:\,a_1^{(t)}(u,w)+c^{(t,m)}(u,w)=\varphi_m(w)=\langle w,\Phi(m)\rangle.
 \end{align*}
Moreover, we assume that $\mathfrak{c}_t$ is the restriction of a continuous affine linear mapping defined on $X$.
 In particular, $S$ is continuously Fr\'echet-differentiable on $\mathcal{G}_t$ thanks to Theorem \ref{regularity}.
Then, for each $m_0\in \mathcal G_t$ and every $\kappa\in(0,1)$ there exists a constant $\varrho=\varrho(m_0,\kappa)>0$ such that $B_\varrho(m_0)\subseteq \mathcal G_t$,
 the Fr\'echet-derivative $\mathrm D_{\mathcal F}S$ of $S$ is bounded on $B_\varrho(m_0)$ and $S$ satisfies on $B_\varrho(m_0)$ a
 $\kappa$-tangential cone condition w.r.t. both $\|\cdot\|_H$ and $\|\cdot\|_V$, i.e., we have
 \begin{align}\label{general tcc}
 \|S(m_1)-S(m_2)-(\mathrm D_{\mathcal F}S(m_2))\lbrack m_1-m_2\rbrack\|_H\leq\kappa\|S(m_1)-S(m_2)\|_H
 \end{align}
 and
  \begin{align}\label{general tcc in V}
 \|S(m_1)-S(m_2)-(\mathrm D_{\mathcal F}S(m_2))\lbrack m_1-m_2\rbrack\|_V\leq\kappa\|S(m_1)-S(m_2)\|_V
 \end{align}
 for all $m_1,m_2\in B_\varrho(m_0)$.
 \end{theorem}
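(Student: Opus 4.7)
My plan is to exploit the variational characterisation of both $S(m)$ and its Fréchet derivative in order to obtain a direct variational equation for the defect $r := S(m_1)-S(m_2)-(\mathrm D_{\mathcal F}S(m_2))[m_1-m_2]$, and then to bound $r$ via Proposition \ref{well-posedness}. The affine linearity of $\Phi$ and $\mathfrak c_t$ is the crucial structural hypothesis: writing $\Phi(m)=\Phi(0)+\Phi_0(m)$ and $\mathfrak c_t(m)=\mathfrak c_t(0)+\mathfrak l_t(m)$ with $\Phi_0\in\Li(X,W^*)$ and $\mathfrak l_t\in\Li(X,\mathcal S(H\times W,\mathbb K))$, one has $\Phi(m_1)-\Phi(m_2)=\Phi_0(m_1-m_2)$, $\mathfrak c_t(m_1)-\mathfrak c_t(m_2)=\mathfrak l_t(m_1-m_2)$, and these are exactly the Fréchet derivatives occurring in the formula \eqref{derivative parameter-to-state operator} (which, importantly, become \emph{independent of the base point}).

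Writing $u_i:=S(m_i)$, $\Delta m:=m_1-m_2$, and $\Delta u:=u_1-u_2$, I would subtract the variational problems satisfied by $u_1$ and $u_2$ to get
\begin{align*}
a_1^{(t)}(\Delta u,w)+c^{(t,m_2)}(\Delta u,w)=\langle w,\Phi_0(\Delta m)\rangle-\mathfrak l_t(\Delta m)(u_1,w)
\end{align*}
for all $w\in W$. Comparing this with \eqref{derivative parameter-to-state operator} at $m=m_2$ and $\widetilde m=\Delta m$, which reads $a_1^{(t)}((\mathrm D_{\mathcal F}S(m_2))[\Delta m],w)+c^{(t,m_2)}((\mathrm D_{\mathcal F}S(m_2))[\Delta m],w)=\langle w,\Phi_0(\Delta m)\rangle-\mathfrak l_t(\Delta m)(u_2,w)$, the defect $r$ therefore solves
\begin{align*}
a_1^{(t)}(r,w)+c^{(t,m_2)}(r,w)=-\mathfrak l_t(\Delta m)(\Delta u,w)\qquad(w\in W).
\end{align*}
By Proposition \ref{well-posedness} (applicable as problem \eqref{general 2} is strongly well-posed at $m_2\in\mathcal G_t$), $r$ is uniquely determined and $r=(I_V+\mathcal C_{t,m_2}^V)^{-1}\mathcal T_t^{-1}\psi$, with $\psi(w):=-\mathfrak l_t(\Delta m)(\Delta u,w)\in W^*$ satisfying $\|\psi\|_{W^*}\leq\|\mathfrak l_t\|_{\op}\|\Delta m\|_X\|\Delta u\|_H$.

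Combining these with the bound from Lemma \ref{Lax-Milgram} gives
\begin{align*}
\|r\|_V\leq\tfrac{1}{c(t)}\|(I_V+\mathcal C_{t,m_2}^V)^{-1}\|_{\Li(V)}\,\|\mathfrak l_t\|_{\op}\,\|\Delta m\|_X\,\|\Delta u\|_H,
\end{align*}
and consequently $\|r\|_H\leq\gamma\|r\|_V$ and $\|\Delta u\|_H\leq\gamma\|\Delta u\|_V$ yield the analogous estimate with $\|\Delta u\|_V$ on the right. Now, since $\mathcal G_t$ is open and $m\mapsto(I_V+\mathcal C_{t,m}^V)^{-1}$ is continuous on $\mathcal G_t$ by Theorem \ref{general global version} (continuity of the map $\mathcal C^V$ from Lemma \ref{general Ausgangslemma} combined with continuity of inversion on $\Is(V)$), choose $\varrho_0>0$ with $B_{\varrho_0}(m_0)\subseteq\mathcal G_t$ and
\begin{align*}
K:=\sup_{m\in B_{\varrho_0}(m_0)}\|(I_V+\mathcal C_{t,m}^V)^{-1}\|_{\Li(V)}<\infty.
\end{align*}
Then setting $\varrho:=\min\bigl\{\varrho_0,\,\tfrac{c(t)\kappa}{\gamma K\|\mathfrak l_t\|_{\op}}\bigr\}$ (or the corresponding quantity without the factor $\gamma$ for the $V$-version) produces the desired $\kappa$-tangential cone inequalities \eqref{general tcc} and \eqref{general tcc in V} on $B_\varrho(m_0)$. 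Boundedness of $\mathrm D_{\mathcal F}S$ on this ball is read off directly from the explicit formula \eqref{derivative parameter-to-state operator} together with the bound on $K$ and continuity of $S$.

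The main technical point is not any single estimate — each is routine — but recognising that affine linearity of $\mathfrak c_t$ and $\Phi$ turns the "second-order" error $r$ into the solution of the \emph{same} variational problem with a right-hand side involving the product of the two small quantities $\Delta m$ and $\Delta u$. This self-referential structure is precisely what yields the tangential cone estimate without any quadratic remainder analysis; the only nontrivial input beyond the earlier results is the local uniform boundedness of the inverse $(I_V+\mathcal C_{t,m}^V)^{-1}$, which follows from openness of $\mathcal G_t$ and the continuity statements of Theorem \ref{global version}.
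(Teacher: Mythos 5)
Your proof is correct and follows essentially the same route as the paper: both derive the variational identity $a_1^{(t)}(r,w)+c^{(t,m_2)}(r,w)=-\mathfrak l_t(\Delta m)(\Delta u,w)$ for the defect (the paper via a telescoping expansion anchored at \eqref{derivative parameter-to-state operator}, you by directly subtracting the two variational problems using the affine decomposition), then invert $\mathcal T_t(I_V+\mathcal C_{t,m_2}^V)$ and conclude via local uniform boundedness of $(I_V+\mathcal C_{t,m}^V)^{-1}$. Two small corrections: since $\|m_1-m_2\|_X\le 2\varrho$ for $m_1,m_2\in B_\varrho(m_0)$, your choice of $\varrho$ needs an extra factor $\tfrac12$ (the paper requires $2\Lambda\varrho<\kappa$), and the factor $\gamma$ cannot be dropped in the $V$-version, since it is needed there to pass from $\|\Delta u\|_H$ to $\|\Delta u\|_V$ on the right-hand side.
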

 
 \vspace*{1ex}
 
 \begin{proof}
 Let $m\in \mathcal G_t$, $h\in X\setminus\{0\}$ such that $m+h\in \mathcal G_t$, let $w\in W$, and put
 $u:=S(m+h)-S(m)-(\mathrm D_{\mathcal F}S(m))\lbrack h\rbrack$.
 Using \eqref{derivative parameter-to-state operator} and \eqref{****}, we deduce
 
 \begin{align*}
 a_1^{(t)}(u,w)+c^{(t,m)}(u,w)
 =&a_1^{(t)}(S(m+h),w)+c^{(t,m+h)}(S(m+h),w)\\
 &-c^{(t,m+h)}(S(m+h),w)+c^{(t,m)}(S(m+h),w)\\
 &-\Big(a_1^{(t)}(S(m),w)+c^{(t,m)}(S(m),w)\Big)\\
 &-\bigg(a_1^{(t)}((\mathrm D_{\mathcal F}S(m))\lbrack h\rbrack,w)+c^{(t,m)}((\mathrm D_{\mathcal F}S(m))\lbrack h\rbrack,w))\bigg)\\
 =&\langle w,\Phi(m+h)\rangle-\langle w,\Phi(m)\rangle\\
 &-c^{(t,m+h)}(S(m+h),w)+c^{(t,m)}(S(m+h),w)\\
 &-\Big\langle w,\mathrm D_{\mathcal F}\Phi(m)\lbrack h\rbrack-\big((\mathrm D_{\mathcal F}\mathcal B_t)(m)\lbrack h\rbrack\big)S(m)\Big\rangle\\
 =&\Big\langle w,\Phi(m+h)-\Phi(m)-\mathrm D_{\mathcal F}\Phi(m)\lbrack h\rbrack\Big\rangle\\
 &-\bigg\langle w,\Big(\mathcal{B}_t(m+h)-\mathcal{B}_t(m)-(\mathrm D_{\mathcal F}\mathcal B_t)(m)\lbrack h\rbrack\Big)(S(m+h))\bigg\rangle\\
 &+\Big\langle w,\big((\mathrm D_{\mathcal F}\mathcal B_t)(m)\lbrack h\rbrack\big)S(m)
 -\big((\mathrm D_{\mathcal F}\mathcal B_t)(m)\lbrack h\rbrack\big)S(m+h)\Big\rangle.
 \end{align*}
 Since $\Phi$ and $\mathcal{B}_t$ are, by assumption, restrictions of continuous, affine linear mappings, the first two terms in the last expression vanish and we conclude
 \begin{align*}
 a_1^{(t)}(u,w)+c^{(t,m)}(u,w)=\Big\langle w,\big((\mathrm D_{\mathcal F}\mathcal B_t)(m)\lbrack h\rbrack\big)S(m)
 -\big((\mathrm D_{\mathcal F}\mathcal B_t)(m)\lbrack h\rbrack\big)S(m+h)\Big\rangle
 \end{align*}
 for all $w\in W$, i.e.,
 \begin{align*}
 \mathcal T_t(I_V+\mathcal{C}_{t,m}^V)u
 &=\big((\mathrm D_{\mathcal F}\mathcal B_t)(m)\lbrack h\rbrack\big)S(m)
 -\big((\mathrm D_{\mathcal F}\mathcal B_t)(m)\lbrack h\rbrack\big)S(m+h)\\
&=\big((\mathrm D_{\mathcal F}\mathcal B_t)(m)\lbrack h\rbrack\big)\big(S(m)-S(m+h)\big)
 \end{align*}
 thanks to part a) of Proposition \ref{well-posedness}.
 This yields
 \begin{align}\label{key}
 &\|S(m+h)-S(m)-(\mathrm D_{\mathcal F}S(m))\lbrack h\rbrack\|_V\\ \notag
 \leq&\|(I_V+\mathcal{C}_{t,m}^V)^{-1}\|_{\op}\cdot\|\mathcal{T}_t^{-1}\|_{\op}
 \cdot\|(\mathrm D_{\mathcal F}\mathcal B_t)(m)\|_{\op}\cdot\|h\|_X\cdot\|S(m)-S(m+h)\|_H\\ \notag
 \leq&\frac{\gamma}{c(t)}\|(I_V+\mathcal{C}_{t,m}^V)^{-1}\|_{\op}\cdot\|(\mathrm D_{\mathcal F}\mathcal B_t)(m)\|_{\Li(X,\Li(H,W^*))}\cdot\|h\|_X\cdot\|S(m)-S(m+h)\|_V.
 \end{align}
 In order to complete the proof, consider an arbitrary $m_0\in \mathcal G$ and any $\kappa\in(0,1)$.
By a simple continuity argument, we can find $\varrho'>0$ such that $B_{\varrho'}(m_0)\subseteq \mathcal G_t$, $\mathrm D_{\mathcal F}S$ is bounded on $B_{\varrho'}(m_0)$, and
\begin{align*}
\Lambda:=
\sup_{m\in B_{\varrho'}(m_0)}\frac{\gamma}{c(t)}\|(I_V+\mathcal{C}_{t,m}^V)^{-1}\|_{\op}\cdot\|(\mathrm D_{\mathcal F}\mathcal B_t)(m)\|_{\Li(X,\Li(H,W^*))}
<\infty.
\end{align*}
Now we choose $\varrho\in(0,\varrho')$ such that $2\Lambda\varrho<\kappa$.
For all $m_1,m_2\in B_{\varrho}(m_0)\subseteq B_{\varrho'}(m_0)$ we then derive, employing inequality \eqref{key} and the triangle inequality,
\begin{align*}
&\|S(m_1)-S(m_2)-\mathrm D_{\mathcal F}S(m_2)\lbrack m_1-m_2\rbrack\|_V\\
\leq&\Lambda\left(\|m_1-m_0\|_X+\|m_0-m_2\|_X\right)\|S(m_1)-S(m_2)\|_V
\leq\kappa\|S(m_1)-S(m_2)\|_V.
\end{align*}
Using the first estimate in \eqref{key}, we also have
\begin{align*}
\|S(m+h)-S(m)-(\mathrm D_{\mathcal F}S(m))\lbrack h\rbrack\|_H
&\leq\gamma\|S(m+h)-S(m)-(\mathrm D_{\mathcal F}S(m))\lbrack h\rbrack\|_V\\
&\leq\Lambda\cdot\|h\|_X\cdot\|S(m)-S(m+h)\|_H
\end{align*}
for $m\in \mathcal G_t$ and $h\in X\setminus\{0\}$ such that $m+h\in \mathcal G_t$.
The same line of argument as before finishes the proof.
 \end{proof}
 
  \vspace*{1ex}
 
\begin{remark}
Observe that the function $S$ in  Theorem \ref{tcc for general S} fulfils a very strong variant of the classical tangential cone condition
as the tangential cone constant $\kappa$ may be chosen arbitrarily small (of course, at the cost of choosing the radius $\varrho$ very small).
\end{remark}
 
 \vspace*{1ex}

In the context of Theorem \ref{tcc for general S} notice that a function given as a constant additive perturbation of a function satisfying a tangential cone condition fulfils the same tangential cone condition.

\subsection{Inverse problem with respect to the parameter $t$}
We assume in this subsection that $E$ is an open set of a Banach space $Y$. A similar line of argument as in the proof of Theorem \ref{regularity}
leads to the subsequent result.

\vspace*{1ex}  
 
 \begin{theorem}\label{regularity wrt t}
 Let $\nu\in\N\cup\{\infty\}$, $m\in U$ be fixed, and $\mathcal{O}_m$ a non-empty, open subset of $E$.
 Assume that problem \eqref{general 2} is strongly well-posed for all $t\in \mathcal{O}_m$. We further consider the mappings 
 \begin{align*}
 \pmb\Upphi:\mathcal{O}_m\rightarrow W^*;\,t\mapsto\upphi_t:= \pmb\Upphi(t)
 \end{align*}
 and
 \begin{align*}
 \mathcal{T}:E\rightarrow\Li(H,W^*);\,t\mapsto\mathcal{T}_t
 \end{align*}
 as well as the parameter-to-state operator
 \begin{align*}
 \pmb\uptau:\mathcal{O}_m\rightarrow V;\, t\mapsto u_{t},
 \end{align*}
 where $u_t=u_{t,m,\upphi_t}$ is the unique solution $u\in V$ of the problem
 \begin{align*}
  \forall\,w\in W:\,a_1^{(t)}(u,w)+c^{(t,m)}(u,w)=\upphi_t(w)=\langle w,\pmb\Upphi(t)\rangle.
 \end{align*}
 \begin{enumerate}
  \item If $\pmb\Upphi$, $\mathcal{T}$ and $\mathfrak{c}(\cdot,m)$ are $\nu$-times (continuously) Fr\'echet-differentiable on $\mathcal{O}_m$, then $\pmb\uptau$ is also $\nu$-times (continuously)
  Fr\'echet-differentiable on $\mathcal{O}_m$.
  \item If $\pmb\Upphi$, $\mathcal{T}$ and $\mathfrak{c}(\cdot,m)$ are analytic on $\mathcal{O}_m$, then $\pmb\uptau$ is also analytic on $\mathcal{O}_m$.
 \end{enumerate}
 \end{theorem}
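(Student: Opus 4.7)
The plan is to parallel the proof of Theorem \ref{regularity}. By part a) of Proposition \ref{well-posedness}, for each $t \in \mathcal{O}_m$ the unique solution admits the representation
\begin{align*}
\pmb\uptau(t) = \left(I_V + \mathcal{T}_t^{-1}\mathcal{B}_{t,m}j\right)^{-1}\mathcal{T}_t^{-1}\pmb\Upphi(t),
\end{align*}
where $\mathcal{B}_{t,m} = \Xi(\mathfrak{c}(t,m))$ with $\Xi:\mathcal{S}(H \times W, \mathbb{K}) \to \Li(H, W^*)$ the isometric isomorphism introduced in the proof of Theorem \ref{regularity}. The key difference to that earlier result is that now both the outer factor $\mathcal{T}_t^{-1}$ and the middle factor depend on $t$, so multiplication by these factors can no longer be absorbed into fixed bounded linear maps such as $L_{\mathcal{T}_t^{-1}}$ or $R_{\mathcal{T}_t^{-1}}$; instead I would treat each multiplication as an application of the continuous bilinear operator-composition map.

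First I would check the regularity of each building block. The map $t \mapsto \mathcal{T}_t^{-1} \in \Li(W^*, V)$ arises by post-composing the given mapping $\mathcal{T}:\mathcal{O}_m \to \Li(V, W^*)$ with the inversion $\inv_{V,W^*}:\Is(V, W^*) \to \Is(W^*, V)$; by Lemma \ref{Lax-Milgram} the image of $\mathcal{T}$ lies entirely in the open set $\Is(V, W^*)$, and since operator inversion is analytic on this open set (cf.\ \cite[p.~1080]{Whittlesey}), $t \mapsto \mathcal{T}_t^{-1}$ inherits the assumed regularity of $\mathcal{T}$. Analogously, $t \mapsto \mathcal{B}_{t,m} = (\Xi \circ \mathfrak{c}(\cdot,m))(t)$ inherits the regularity of $\mathfrak{c}(\cdot,m)$.

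Next, the triple product $t \mapsto \mathcal{T}_t^{-1}\mathcal{B}_{t,m}j \in \Li(V)$ is $\nu$-times Fr\'echet-differentiable (resp. analytic), being a composition of analytic bilinear operator-composition maps with factors of the required regularity. Adding $I_V$ and applying $\inv_V:\Is(V) \to \Is(V)$ is legitimate because the strong well-posedness hypothesis together with part b) of Proposition \ref{well-posedness} guarantees $I_V + \mathcal{T}_t^{-1}\mathcal{B}_{t,m}j = I_V + \mathcal{C}_{t,m}^V \in \Is(V)$ for every $t \in \mathcal{O}_m$, so the resulting map carries over the same regularity. A final bilinear evaluation $(T,\psi) \mapsto T\psi$ combines this with $\mathcal{T}_t^{-1}\pmb\Upphi(t) \in W^*$ and delivers $\pmb\uptau(t) \in V$. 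Chain and product rules, together with the fact that analyticity is preserved under composition with analytic maps, yield both claims a) and b) simultaneously.

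The principal obstacle is organisational rather than conceptual: one must verify that every intermediate map lands in the open set on which the subsequent inversion (or multiplication) is defined and analytic, so that the chain rule applies. This is precisely what Lemma \ref{Lax-Milgram} (for the outer inversion of $\mathcal{T}_t$) and the hypothesis of strong well-posedness on $\mathcal{O}_m$ (for the inner inversion of $I_V + \mathcal{C}_{t,m}^V$) secure. Apart from this bookkeeping, the argument runs in complete parallel to Theorem \ref{regularity}, the essential new feature being that both inversion steps now involve genuinely $t$-dependent operators rather than a fixed $\mathcal{T}_t^{-1}$.
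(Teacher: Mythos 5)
Your proposal is correct and follows essentially the same route as the paper, which likewise reduces the claim to the chain and product rules applied to a composition of analytic building blocks (the isometry $\Xi$, operator composition, and inversion on the open set of isomorphisms). The only cosmetic difference is that the paper works with the single inversion $\pmb\uptau(t)=\left(\mathcal{T}(t)+\mathcal{B}^{m}(t)j\right)^{-1}\pmb\Upphi(t)$, whereas you keep the two-factor form $(I_V+\mathcal{T}_t^{-1}\mathcal{B}_{t,m}j)^{-1}\mathcal{T}_t^{-1}\pmb\Upphi(t)$ and justify each inversion separately via Lemma \ref{Lax-Milgram} and strong well-posedness; the two representations coincide, so this changes nothing of substance.
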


\vspace*{1ex}  
 
 Assume that the hypotheses from Theorem \ref{regularity wrt t} hold.
We use a suitable variant of the representation \eqref{representation parameter-to-state operator} to calculate the Fr\'echet-derivative of
$\pmb\uptau$.
Hereafter, we shall give sufficient conditions that guarantee that $\pmb\uptau$ satisfies the tangential cone condition.
For that purpose, we first note that
\begin{align*}
\pmb\uptau(t)=\left(\mathcal{T}_t+\mathcal{B}_{t,m}j\right)^{-1}\upphi_t=\left(\mathcal{T}(t)+\mathcal{B}^{m}(t)j\right)^{-1}\pmb\Upphi(t),
\end{align*}
where
\begin{align*}
\mathcal{B}^m:E\rightarrow\Li(H,W^*);\,t\mapsto\mathcal{B}_{t,m}.
\end{align*}
Similarly as in the preceding subsection, we obtain
\begin{align*}
&\mathrm{D}_{\mathcal{F}}\pmb\uptau(t)\lbrack y\rbrack\\
=&-\left(\mathcal{T}(t)+\mathcal{B}^{m}(t)j\right)^{-1}
\left(\mathrm{D}_{\mathcal{F}}\mathcal{T}(t)\lbrack y\rbrack+\mathrm{D}_{\mathcal{F}}\mathcal{B}^{m}(t)\lbrack y\rbrack j\right)
\left(\mathcal{T}(t)+\mathcal{B}^{m}(t)j\right)^{-1}\pmb\Upphi(t)\\
&+\left(\mathcal{T}(t)+\mathcal{B}^{m}(t)j\right)^{-1}\mathrm{D}_{\mathcal{F}}\pmb\Upphi(t)\lbrack y\rbrack\\
=&-\left(\mathcal{T}(t)+\mathcal{B}^{m}(t)j\right)^{-1}
\left(\mathrm{D}_{\mathcal{F}}\mathcal{T}(t)\lbrack y\rbrack+\mathrm{D}_{\mathcal{F}}\mathcal{B}^{m}(t)\lbrack y\rbrack j\right)\pmb\uptau(t)\\
&+\left(\mathcal{T}(t)+\mathcal{B}^{m}(t)j\right)^{-1}\mathrm{D}_{\mathcal{F}}\pmb\Upphi(t)\lbrack y\rbrack
\end{align*}
for all $t\in\mathcal{O}_m$ and all $y\in Y$. So, $\mathrm{D}_{\mathcal{F}}\pmb\uptau(t)\lbrack y\rbrack$ is the unique element $u\in V$ such that
\begin{align}\label{derivative wrt t parameter-to-state operator}
a_1^{(t)}(u,w)+c^{(t,m)}(u,w)
=\Big\langle w,\mathrm D_{\mathcal F}\pmb\Upphi(t)\lbrack x\rbrack
-\left(\mathrm{D}_{\mathcal{F}}\mathcal{T}(t)\lbrack y\rbrack+\mathrm{D}_{\mathcal{F}}\mathcal{B}^{m}(t)\lbrack y\rbrack j\right)\pmb\uptau(t)\Big\rangle
\end{align}
for all $w\in W$.

\vspace*{1ex} 
 
\begin{theorem}\label{tcc for general tau}
 Let $m\in U$ be fixed and $\mathcal{O}_m$ a non-empty, open subset of $E$.
 Assume that problem \eqref{general 2} is strongly well-posed for all $t\in \mathcal{O}_m$.
We further consider a continuous affine linear mapping
 \begin{align*}
 \pmb\Upphi:Y\rightarrow W^*;\,t\mapsto\upphi_t
 \end{align*}
 as well as the parameter-to-state operator
 \begin{align*}
 \pmb\uptau:\mathcal{O}_m\rightarrow V;\, t\mapsto u_{t},
 \end{align*}
 where $u_t=u_{t,m,\upphi_t}$ is the unique solution $u\in V$ of the problem
 \begin{align*}
  \forall\,w\in W:\,a_1^{(t)}(u,w)+c^{(t,m)}(u,w)=\upphi_t(w)=\langle w,\pmb\Upphi(t)\rangle.
 \end{align*}
 Moreover, we assume that, for fixed $m$, $\mathfrak{c}(\cdot,m)$ and $\mathcal{T}$ are restrictions of continuous affine linear functions defined on $Y$
 and that the quantity $c(t)$ is chosen such that it depends continuously on $t$.
 In particular, $\pmb\uptau$ is continuously Fr\'echet-differentiable on $\mathcal{O}_m$ thanks to Theorem \ref{regularity wrt t}.
Then, for each $t_0\in \mathcal O_m$ and every $\kappa\in(0,1)$ there exists a constant $\varrho=\varrho(t_0,\kappa)>0$ such that $B_\varrho(t_0)\subseteq \mathcal O_m$,
 the Fr\'echet-derivative $\mathrm D_{\mathcal F}\pmb\uptau$ of $\pmb\uptau$ is bounded on $B_\varrho(t_0)$ and $\pmb\uptau$ satisfies on $B_\varrho(t_0)$ a
 $\kappa$-tangential cone condition w.r.t. both $\|\cdot\|_H$ and $\|\cdot\|_V$, i.e., we have
 \begin{align}\label{general tcc wrt t}
 \|\pmb\uptau(t_1)-\pmb\uptau(t_2)-(\mathrm D_{\mathcal F}\pmb\uptau(t_2))\lbrack t_1-t_2\rbrack\|_H\leq\kappa\|\pmb\uptau(t_1)-\pmb\uptau(t_2)\|_H
 \end{align}
 and
  \begin{align}\label{general tcc wrt t in V}
 \|\pmb\uptau(t_1)-\pmb\uptau(t_2)-(\mathrm D_{\mathcal F}\pmb\uptau(t_2))\lbrack t_1-t_2\rbrack\|_V\leq\kappa\|\pmb\uptau(t_1)-\pmb\uptau(t_2)\|_V
 \end{align}
 for all $t_1,t_2\in B_\varrho(t_0)$.
 \end{theorem}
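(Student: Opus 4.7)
The plan is to follow the template of the proof of Theorem~\ref{tcc for general S}. Fix $t\in\mathcal{O}_m$ and $h\in Y\setminus\{0\}$ with $t+h\in\mathcal{O}_m$, and set $u:=\pmb\uptau(t+h)-\pmb\uptau(t)-\mathrm{D}_{\mathcal F}\pmb\uptau(t)[h]$. I would compute $a_1^{(t)}(u,w)+c^{(t,m)}(u,w)$ by plugging in the weak formulations for $\pmb\uptau(t+h)$ (at parameter $t+h$) and $\pmb\uptau(t)$, together with formula \eqref{derivative wrt t parameter-to-state operator} for the derivative. The form differences $a_1^{(t+h)}-a_1^{(t)}$ and $c^{(t+h,m)}-c^{(t,m)}$, viewed in operator form, are exactly $\mathcal{T}_{t+h}-\mathcal{T}_t$ and $\mathcal{B}_{t+h,m}-\mathcal{B}_{t,m}$; under the affine linearity hypothesis on $\mathcal{T}$ and $\mathfrak{c}(\cdot,m)$ these equal the Fr\'echet derivatives in direction $h$ with no second-order Taylor remainder, and the same is true for $\pmb\Upphi(t+h)-\pmb\Upphi(t)$. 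After the telescoping and cancellation of all $\pmb\Upphi$-terms and of the linear pieces acting on $\pmb\uptau(t)$, the identity
\begin{align*}
a_1^{(t)}(u,w)+c^{(t,m)}(u,w)
=-\Big\langle w,\, \big(\mathrm{D}_{\mathcal F}\mathcal{T}(t)[h]+\mathrm{D}_{\mathcal F}\mathcal{B}^m(t)[h]\,j\big)\big(\pmb\uptau(t+h)-\pmb\uptau(t)\big)\Big\rangle
\end{align*}
should drop out for every $w\in W$. This is the direct analogue of the key identity in the proof of Theorem~\ref{tcc for general S}, with an extra term $\mathrm{D}_{\mathcal F}\mathcal{T}(t)[h]$ that is absent there because in the $m$-case $\mathcal{T}_t$ is independent of $m$.

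From there I would invert $\mathcal{T}_t(I_V+\mathcal{C}_{t,m}^V)$, which is an isomorphism by strong well-posedness and Proposition~\ref{well-posedness}, and use Lemma~\ref{Lax-Milgram}(b) to bound
\begin{align*}
\|u\|_V\leq \tfrac{1}{c(t)}\,\|(I_V+\mathcal{C}_{t,m}^V)^{-1}\|_{\op}\,\Big\|\big(\mathrm{D}_{\mathcal F}\mathcal{T}(t)[h]+\mathrm{D}_{\mathcal F}\mathcal{B}^m(t)[h]\,j\big)\big(\pmb\uptau(t+h)-\pmb\uptau(t)\big)\Big\|_{W^*}.
\end{align*}
Bounding the second factor by operator norms, and using $\|\cdot\|_H\leq\gamma\|\cdot\|_V$ to merge contributions of different types, furnishes a quantity $\Lambda(t)$ continuous in $t$ (by the assumptions on $c(t)$, on $\mathrm{D}_{\mathcal F}\mathcal{T}$, $\mathrm{D}_{\mathcal F}\mathcal{B}^m$ and on $\mathcal{C}^V$) such that
\begin{align*}
\|u\|_V\leq\Lambda(t)\,\|h\|_Y\,\|\pmb\uptau(t+h)-\pmb\uptau(t)\|_V.
\end{align*}
Choosing $\varrho'>0$ with $B_{\varrho'}(t_0)\subseteq\mathcal{O}_m$ on which $\Lambda$ and $\mathrm{D}_{\mathcal F}\pmb\uptau$ are uniformly bounded, shrinking to $\varrho\in(0,\varrho')$ so that $2\sup_{B_{\varrho'}(t_0)}\Lambda\cdot\varrho<\kappa$, and combining with the triangle inequality $\|t_1-t_2\|_Y\leq 2\varrho$, yields \eqref{general tcc wrt t in V} precisely as in the proof of Theorem~\ref{tcc for general S}. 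The companion estimate \eqref{general tcc wrt t} is then obtained by applying $\|u\|_H\leq\gamma\|u\|_V$ to the left side of the key estimate.

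The main obstacle, and the place where this plan differs qualitatively from the $m$-case, is the $\mathrm{D}_{\mathcal F}\mathcal{T}(t)[h]$-contribution: since $\mathcal{T}(t)\in\Li(V,W^*)$ rather than $\Li(H,W^*)$, its Fr\'echet derivative naturally produces the $V$-norm of $\pmb\uptau(t+h)-\pmb\uptau(t)$ on the right-hand side, whereas the analogous $m$-derivative $\mathrm{D}_{\mathcal F}\mathcal{B}_t(m)[h]$ in Theorem~\ref{tcc for general S} lived in $\Li(H,W^*)$ and produced the $H$-norm for free. Consequently \eqref{general tcc wrt t in V} is the clean output of this scheme, while deducing \eqref{general tcc wrt t} in full generality requires either the structural simplification that $\mathcal{T}$ is actually $t$-independent (which already covers the scattering applications, where the principal part is just the Laplacian) or an auxiliary quantitative comparison between $\|\pmb\uptau(t+h)-\pmb\uptau(t)\|_V$ and $\|\pmb\uptau(t+h)-\pmb\uptau(t)\|_H$ on the ball $B_\varrho(t_0)$; this is the delicate step that I expect to require the most care in the actual proof.
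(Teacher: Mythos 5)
Your proposal follows the paper's proof of Theorem~\ref{tcc for general tau} essentially step for step: the same telescoping of the weak formulations around the derivative formula \eqref{derivative wrt t parameter-to-state operator}, the same cancellation of the Taylor remainders of $\pmb\Upphi$, $\mathcal{T}(\cdot)$ and $\mathcal{B}^{m}(\cdot)$ via the affine-linearity hypothesis, the same inversion of $\mathcal{T}_t(I_V+\mathcal{C}_{t,m}^V)$ through Proposition~\ref{well-posedness} and Lemma~\ref{Lax-Milgram}, and the same continuity-and-shrinking argument ($2\Lambda\varrho<\kappa$ plus the triangle inequality) to produce $\varrho$; in particular your key identity agrees, up to sign convention, with the one in the paper, and the $V$-estimate \eqref{general tcc wrt t in V} comes out exactly as there. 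The one point where you diverge is the $H$-estimate \eqref{general tcc wrt t}, which you flag as delicate because $\mathrm{D}_{\mathcal F}\mathcal{T}(t)\lbrack h\rbrack$ a priori lies only in $\Li(V,W^*)$. The paper does not carry out the auxiliary norm comparison you anticipate: the first line of its key estimate \eqref{key 2} directly bounds the right-hand side by $\|\mathrm{D}_{\mathcal{F}}\mathcal{T}(t)+\mathrm{D}_{\mathcal{F}}\mathcal{B}^{m}(t)\|_{\op}\,\|h\|\,\|\pmb\uptau(t)-\pmb\uptau(t+h)\|_H$, i.e., it treats $\mathrm{D}_{\mathcal F}\mathcal{T}(t)\lbrack h\rbrack$ as an operator acting on $H$ --- consistent with the typing $\mathcal{T}:E\rightarrow\Li(H,W^*)$ declared in Theorem~\ref{regularity wrt t} --- and then deduces the $H$-version verbatim as in Theorem~\ref{tcc for general S}. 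So your reservation is well taken: without that (implicit) extra hypothesis that $t\mapsto\mathcal{T}_t$ is differentiable into $\Li(H,W^*)$, only the $V$-version drops out of this scheme, and the fallbacks you propose (a $t$-independent principal part, or a quantitative comparison of $\|\cdot\|_V$ and $\|\cdot\|_H$ on the relevant differences) are legitimate ways to recover \eqref{general tcc wrt t}; this is a genuine subtlety in the paper's own argument rather than a defect of yours.
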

 
 \vspace*{1ex} 
 
 \begin{proof}
  Let $t\in \mathcal O_m$, $h\in Y\setminus\{0\}$ such that $t+h\in \mathcal O_m$, let $w\in W$, and put
 $u:=\pmb\uptau(t+h)-\pmb\uptau(t)-(\mathrm D_{\mathcal F}\pmb\uptau(t))\lbrack h\rbrack$.
 Using \eqref{derivative wrt t parameter-to-state operator} and \eqref{****}, we infer
 \begin{align*}
&a_1^{(t)}(u,w)+c^{(t,m)}(u,w)\\
=&a_1^{(t)}(\pmb\uptau(t+h),w)+c^{(t,m)}(\pmb\uptau(t+h),w)\\
&-(a_1^{(t)}(\pmb\uptau(t),w)+c^{(t,m)}(\pmb\uptau(t),w))\\
&-(a_1^{(t)}((\mathrm D_{\mathcal F}\pmb\uptau(t))\lbrack h\rbrack,w)+c^{(t,m)}((\mathrm D_{\mathcal F}\pmb\uptau(t))\lbrack h\rbrack,w))\\
=&a_1^{(t+h)}(\pmb\uptau(t+h),w)+c^{(t+h,m)}(\pmb\uptau(t+h),w)
-\langle w,\pmb\Upphi(t)\rangle\\
&-\Big\langle w,\mathrm D_{\mathcal F}\pmb\Upphi(t)\lbrack h\rbrack
-\left(\mathrm{D}_{\mathcal{F}}\mathcal{T}(t)\lbrack h\rbrack+\mathrm{D}_{\mathcal{F}}\mathcal{B}^{m}(t)\lbrack h\rbrack j\right)\pmb\uptau(t)\Big\rangle\\
&+a_1^{(t)}(\pmb\uptau(t+h),w)-a_1^{(t+h)}(\pmb\uptau(t+h),w)\\
&+c^{(t,m)}(\pmb\uptau(t+h),w)-c^{(t+h,m)}(\pmb\uptau(t+h),w)\\
=&\langle w,\pmb\Upphi(t+h)-\pmb\Upphi(t)-\mathrm D_{\mathcal F}\pmb\Upphi(t)\rangle\\
&-\left(a_1^{(t+h)}(\pmb\uptau(t+h),w)-a_1^{(t)}(\pmb\uptau(t+h),w)
-\left\langle w,\mathrm{D}_{\mathcal{F}}\mathcal{T}(t)\lbrack h\rbrack\pmb\uptau(t+h)\right \rangle\right)\\
&+\left\langle w,\mathrm{D}_{\mathcal{F}}\mathcal{T}(t)\lbrack h\rbrack\pmb\uptau(t)-
\mathrm{D}_{\mathcal{F}}\mathcal{T}(t)\lbrack h\rbrack\pmb\uptau(t+h)\right \rangle\\
&-\left(c^{(t+h,m)}(\pmb\uptau(t+h),w)-c^{(t,m)}(\pmb\uptau(t+h),w)
-\Big\langle w,\mathrm{D}_{\mathcal{F}}\mathcal{B}^{m}(t)\lbrack h\rbrack j\pmb\uptau(t+h)\Big\rangle\right)\\
&+\Big\langle w,\mathrm{D}_{\mathcal{F}}\mathcal{B}^{m}(t)\lbrack h\rbrack j\pmb\uptau(t)
-\mathrm{D}_{\mathcal{F}}\mathcal{B}^{m}(t)\lbrack h\rbrack j\pmb\uptau(t+h)\Big\rangle.
 \end{align*}
 Since $\pmb\Upphi$, $\mathfrak{c}(\cdot,m)$ and $\mathcal{T}$ are restrictions of continuous affine linear functions, we deduce
 \begin{align*}
 a_1^{(t)}(u,w)+c^{(t,m)}(u,w)=\left\langle w,\Big(\mathrm{D}_{\mathcal{F}}\mathcal{T}(t)\lbrack h\rbrack+
 \mathrm{D}_{\mathcal{F}}\mathcal{B}^{m}(t)\lbrack h\rbrack j\Big)(\pmb\uptau(t)-\pmb\uptau(t+h))\right\rangle
 \end{align*}
 for all $w\in W$. Due to the well-posedness assumption, this yields
 \begin{align*}
 u=(I_V+\mathcal{C}_{t,m}^V)^{-1}\mathcal T_t^{-1}\Big(\mathrm{D}_{\mathcal{F}}\mathcal{T}(t)\lbrack h\rbrack+
 \mathrm{D}_{\mathcal{F}}\mathcal{B}^{m}(t)\lbrack h\rbrack\Big)(\pmb\uptau(t)-\pmb\uptau(t+h)),
 \end{align*}
 which implies
 \begin{align}\label{key 2}
 \|u\|_V
 &\leq\frac{1}{c(t)}\|(I_V+\mathcal{C}_{t,m}^V)^{-1}\|_{\op}\|\mathrm{D}_{\mathcal{F}}\mathcal{T}(t)+
 \mathrm{D}_{\mathcal{F}}\mathcal{B}^{m}(t)\|_{\op}\|h\|_X\|\pmb\uptau(t)-\pmb\uptau(t+h)\|_H\\ \notag
 &\leq\frac{\gamma}{c(t)}\|(I_V+\mathcal{C}_{t,m}^V)^{-1}\|_{\op}\|\mathrm{D}_{\mathcal{F}}\mathcal{T}(t)+
 \mathrm{D}_{\mathcal{F}}\mathcal{B}^{m}(t)\|_{\op}\|h\|_X\|\pmb\uptau(t)-\pmb\uptau(t+h)\|_V.
 \end{align}
Now, pick $m_0\in \mathcal G$ and fix $\kappa\in(0,1)$.
By a simple continuity argument, we can find $\varrho'>0$ such that $B_{\varrho'}(t_0)\subseteq \mathcal O_m$, $\mathrm D_{\mathcal F}\pmb\uptau$ is bounded on $B_{\varrho'}(t_0)$, and
\begin{align*}
\Uplambda:=
\sup_{t\in B_{\varrho'}(t_0)}\frac{\gamma}{c(t)}\|(I_V+\mathcal{C}_{t,m}^V)^{-1}\|_{\op}\|\mathrm{D}_{\mathcal{F}}\mathcal{T}(t)+
 \mathrm{D}_{\mathcal{F}}\mathcal{B}^{m}(t)\|_{\op}
<\infty.
\end{align*}
Now we choose $\varrho\in(0,\varrho')$ such that $2\Uplambda\varrho<\kappa$.
For all $t_1,t_2\in B_{\varrho}(t_0)\subseteq B_{\varrho'}(t_0)$ we then derive, employing inequality \eqref{key 2} and the triangle inequality,
\begin{align*}
&\|\pmb\uptau(t_1)-\pmb\uptau(t_2)-(\mathrm D_{\mathcal F}\pmb\uptau(t_2))\lbrack t_1-t_2\rbrack\|_V\\
\leq&\Uplambda\left(\|t_1-t_0\|_X+\|t_0-t_2\|_X\right)\|\pmb\uptau(t_1)-\pmb\uptau(t_2)\|_V
\leq\kappa\|\pmb\uptau(t_1)-\pmb\uptau(t_2)\|_V.
\end{align*}
The assertion for $\|\cdot\|_H$ instead of $\|\cdot\|_V$ is proved as in the proof of Theorem \ref{tcc for general S}.
 \end{proof}

\subsection{Inverse problem with respect to the parameter $(t,m)$}
We assume in this subsection once again that $E$ is an open set of a Banach space $Y$. We are thus dealing with a parameter-to-state map $\Uptheta : \mathcal{O} \subseteq E \times U \rightarrow V, \ (t,m) \mapsto \Uptheta(t,m)$ that depends on the two variables $m$ and $t$. Since we are interested in the Fr\'echet-differentiability of $\Uptheta$, it is worth to recall the following statement (see, e.g., \cite[VII.8.1 (b)]{AmannEscherII}):
 Let $X_j$ be normed spaces for $j\in\{1,2,3\}$, $U_k\subseteq X_k$ open and non-empty for $k\in\{1,2\}$, and $F:U_1\times U_2\rightarrow X_3$ a function with the following two properties.
 \begin{itemize}
  \item For every $x_2\in U_2$ the function
  \begin{align*}
  F^{x_2}:=F(\cdot,x_2):U_1\rightarrow X_3;\,x_1\mapsto F(x_1,x_2)
  \end{align*}
  is Fr\'echet-differentiable and the function
  \begin{align*}
  \mathrm{D}_{\mathcal{F}}^{(1)}F:U_1\times U_2\rightarrow\Li(X_1,X_3);\, (x_1,x_2)\mapsto (\mathrm{D}_{\mathcal{F}}F^{x_2})(x_1)
  \end{align*}
  is continuous.
    \item For every $x_1\in U_1$ the function
  \begin{align*}
  F_{x_1}:=F(x_1,\cdot):U_2\rightarrow X_3;\,x_2\mapsto F(x_1,x_2)
  \end{align*}
  is Fr\'echet-differentiable and the function
  \begin{align*}
  \mathrm{D}_{\mathcal{F}}^{(2)}F:U_1\times U_2\rightarrow\Li(X_2,X_3);\, (x_1,x_2)\mapsto (\mathrm{D}_{\mathcal{F}}F_{x_1})(x_2)
  \end{align*}
  is continuous.
 \end{itemize}
 Then $F$ itself is continuously Fr\'echet-differentiable with
 \begin{align*}
 (\mathrm{D}_{\mathcal{F}}F)(x_1,x_2)\lbrack\xi_1,\xi_2\rbrack=(\mathrm{D}_{\mathcal{F}}^{(1)}F)(x_1,x_2)\lbrack\xi_1\rbrack
 +(\mathrm{D}_{\mathcal{F}}^{(1)}F)(x_1,x_2)\lbrack\xi_2\rbrack
 \end{align*}
for all $(x_1,x_2)\in U_1\times U_2$ and $(\xi_1,\xi_2)\in X_1\times X_2$.

\vspace*{1ex}

In view of the previous findings, it is now clear how to prove the following theorem.

\vspace*{1ex}  
 
 \begin{theorem}\label{regularity wrt t and m}
 Let $\nu\in\N\cup\{\infty\}$ and let $\mathcal{O}$ be a non-empty, open subset of $E\times U$.
 Assume that problem \eqref{general 2} is strongly well-posed for all $(t,m)\in \mathcal{O}$. We further consider a mapping
 \begin{align*}
 \Uppsi:\mathcal{O}\rightarrow W^*;\,(t,m)\mapsto\uppsi_{t,m}
 \end{align*}
 as well as the parameter-to-state operator
 \begin{align*}
 \Uptheta:\mathcal{O}\rightarrow V;\, (t,m)\mapsto u_{t,m},
 \end{align*}
 where $u_t=u_{t,m,\uppsi_{t,m}}$ is the unique solution $u\in V$ of the problem
 \begin{align*}
  \forall\,w\in W:\,a_1^{(t)}(u,w)+c^{(t,m)}(u,w)=\uppsi_{t,m}(w)=\langle w,\Uppsi(t,m)\rangle.
 \end{align*}
 \begin{enumerate}
  \item If $\Uppsi$ and $\mathfrak{c}$ are both $\nu$-times (continuously) Fr\'echet-differentiable on $\mathcal{O}$, then $\Uptheta$ is also $\nu$-times (continuously)
  Fr\'echet-differentiable on $\mathcal{O}$.
  \item If $\Uppsi$ and $\mathfrak{c}$ are both analytic on $\mathcal{O}$, then $\Uptheta$ is also analytic on $\mathcal{O}$.
 \end{enumerate}
 \end{theorem}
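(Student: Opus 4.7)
The plan is to reduce the assertion to the two single-variable results established earlier, namely Theorem~\ref{regularity} and Theorem~\ref{regularity wrt t}, and then to glue them together via the multivariable Fr\'echet-differentiability criterion quoted immediately before the statement. The starting point would be the representation
\[
\Uptheta(t,m)=(I_V+\mathcal{T}_t^{-1}\mathcal{B}_{t,m}j)^{-1}\mathcal{T}_t^{-1}\Uppsi(t,m),
\]
which is well-defined on the open set $\mathcal{O}$ thanks to strong well-posedness and part~a) of Proposition~\ref{well-posedness}; here $\mathcal{B}_{t,m}x=c^{(t,m)}(x,\cdot)$ as in the proofs of the previous theorems.

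For each fixed $t\in E$ for which $\mathcal{G}_t:=\{m\in U:(t,m)\in\mathcal{O}\}$ is non-empty, the slice $\mathcal{G}_t$ is open in $X$, and Theorem~\ref{regularity}, applied with $\Phi:=\Uppsi(t,\cdot)$ and $\mathfrak{c}_t=\mathfrak{c}(t,\cdot)$ (both $\nu$-times (continuously) Fr\'echet-differentiable since $\Uppsi$ and $\mathfrak{c}$ are), yields that $\Uptheta(t,\cdot)$ is $\nu$-times (continuously) Fr\'echet-differentiable on $\mathcal{G}_t$ together with a closed-form formula for the partial derivative $\mathrm{D}_{\mathcal F}^{(2)}\Uptheta(t,m)$. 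Analogously, for each fixed $m$ with $\mathcal{O}_m:=\{t\in E:(t,m)\in\mathcal{O}\}$ non-empty, Theorem~\ref{regularity wrt t} gives the same conclusion in the $t$-variable and an analogous formula for $\mathrm{D}_{\mathcal F}^{(1)}\Uptheta(t,m)$.

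To apply the quoted two-variable criterion I then need joint continuity of both partial Fr\'echet derivatives on $\mathcal{O}$. I would read this off directly from the closed-form expressions above: each partial derivative is obtained by composing the jointly continuous data mappings $\mathcal{T}$, $\mathcal{B}$, $\Uppsi$ and their first partial Fr\'echet derivatives with continuous algebraic operations, namely the inversion $\inv_V:\Is(V)\to\Is(V)$, the bounded bilinear evaluation map $\mathfrak{b}$, and the bounded linear rearrangement operators of the type $L_{\cdot}$, $R_{\cdot}$ used in the proof of Theorem~\ref{regularity}. Joint continuity of the partial derivatives then follows from the composition of jointly continuous data with continuous algebraic operations, and the criterion delivers that $\Uptheta$ is continuously Fr\'echet-differentiable on $\mathcal{O}$.

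To reach the general case $\nu\in\N\cup\{\infty\}$ in~a), I would iterate this argument: the first Fr\'echet derivative is itself given by a representation whose ingredients are $(\nu-1)$-times (continuously) Fr\'echet-differentiable on $\mathcal{O}$, so the same machinery applies inductively. For assertion~b), all non-data building blocks in the representation (inversion on $\Is(V)$, bounded multilinear maps, translations, evaluations) are known to be analytic in the Whittlesey sense, so $\Uptheta$ is exhibited as a composition of analytic mappings and the chain rule for analytic maps between Banach spaces closes the argument. The main technical hurdle I anticipate is establishing the joint continuity of the partial derivatives uniformly across $\mathcal{O}$; once this is extracted from the closed-form expressions, everything else is operator-theoretic bookkeeping mirroring the proofs of Theorems~\ref{regularity} and~\ref{regularity wrt t}.
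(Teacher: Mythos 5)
Your proposal is correct and follows exactly the route the paper intends: the paper supplies no written proof beyond the remark that, ``in view of the previous findings, it is now clear how to prove the following theorem,'' meaning precisely the combination of Theorems \ref{regularity} and \ref{regularity wrt t} on the slices with the two-variable partial-derivative criterion quoted immediately before the statement, and your argument is the elaboration of that recipe (including the standard localisation of $\mathcal{O}$ to product neighbourhoods and the direct composition argument for the analytic case). One caveat you inherit from the paper: applying Theorem \ref{regularity wrt t} on the $t$-slices requires $t\mapsto\mathcal{T}_t$ (equivalently $\mathfrak{a}_1$) to be $\nu$-times Fr\'echet-differentiable, a hypothesis listed in Theorem \ref{regularity wrt t} but absent from Theorem \ref{regularity wrt t and m}, so strictly speaking it must be added to the present statement for your argument (and the paper's) to go through.
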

 
 \vspace*{1ex}

\subsection{A specific situation suited for inverse scattering problems}
In this subsection we consider a special case, which encompasses in particular the inverse problem from THz tomography as considered in \cite{awts18} and the inverse medium problem treated in \cite{gbpl}. 
Throughout this subsection we make the general assumption that we are in the situation of Theorem \ref{global version} or Theorem \ref{local version}. 
However, we specify even more the situation considered there. 

\vspace*{1ex}

First, we fix $t\in E$ and we assume that $\lambda := \lambda(t)\not=0$. For that reason we shall not mention $t$ any more in this section and suppress it in our notation. 
Second, we assume that there is a non-empty, open set $\mathcal G_t=\mathcal G\subseteq U$
such that $\{t\}\times\mathcal G\subseteq\mathcal U$
resp. $\{t\}\times\mathcal G\subseteq\mathcal O$, depending whether we are in the situation of Theorem \ref{global version}
or Theorem \ref{local version}. Third, we consider a continuous and \emph{linear} function
\begin{align*}
\mathfrak{b}:X\rightarrow\mathcal{S}(H\times W,\mathbb K), \,m\mapsto b^{(m)}.
\end{align*}
Finally, let $a_3\in\mathcal{S}(H\times W,\mathbb K)$.
Note that in specific situations both $\mathfrak{b}$ and $a_3$ may (and indeed will in general) also depend on $t$ (see below),
but since $t$ is fixed, such a dependence plays no role in the following considerations.

\vspace*{1ex}

In what follows we suppose that $\mathfrak{a}_2$ is given by
\begin{align}\label{a2}
a_2^{(m)}(x,w)=-\frac{1}{\lambda}b^{(m)}(x,w) + a_3(x,w)
\end{align}
for $m\in \mathcal G$, $x\in H$ and $w\in W$.

It is important to observe the following: While $\mathfrak{b}$ and $a_3$ may also depend on $t$,
this is not allowed for $\mathfrak{a}_2$!
To put it another way, the dependencies of $\lambda$, $\mathfrak{b}$ and $a_3$
on $t$ must interact in such a way that $\mathfrak{a}_2$ does not depend on $t$ any more.

\vspace*{1ex}

\begin{remark}
 The above claim is fulfilled for the variational problems from THz tomography and the inverse medium problem: The fixed parameter $t$ corresponds to the frequency $k_0$ of the radiation. We further set $\lambda(t) = t^2$, $a_3(x,w) := (x|w)_{L^2(\Omega)}$, and $b^{(m)}(x,w) = t^2(mx|w)_{L^2(\Omega)}$ such that we obtain the variational formulation of \eqref{hhg}, \eqref{rbc}. Note that $a_1(x,w)$ represents the Robin-Laplace operator in this variational problem.
\end{remark}

\vspace*{1ex}

It is obvious that in this case $a_2^{(m)}\in\mathcal{S}(H\times W,\mathbb K)$.
Moreover, for $m,\widetilde m\in \mathcal G$ we calculate
\begin{align*}
\|\mathfrak{a}_2(m)-\mathfrak{a}_2(\widetilde m)\|_{\mathcal{S}(H\times W,\mathbb K)}
&= \sup_{\genfrac{}{}{0pt}{}{x\in H}{\|x\|_H\leq 1}}\sup_{\genfrac{}{}{0pt}{}{w\in W}{\|w\|_W\leq 1}}
\left|-\frac{1}{\lambda}b^{(m)}(x,w)+\frac{1}{\lambda}b^{(\widetilde m)}(x,w)\right|\\
&=\frac{1}{|\lambda|}\sup_{\genfrac{}{}{0pt}{}{x\in H}{\|x\|_H\leq 1}}\sup_{\genfrac{}{}{0pt}{}{w\in W}{\|w\|_W\leq 1}}
\left|b^{(m)}(x,w)-b^{(\widetilde m)}(x,w)\right|\\
&=\frac{1}{|\lambda|}\cdot\|\mathfrak{b}(m)-\mathfrak{b}(\widetilde m)\|_{\mathcal{S}(H\times W,\mathbb K)}
\xrightarrow[m\to\widetilde m]{}0.
\end{align*}
As a consequence, we see that $\mathfrak{a}_2$ is indeed continuous.
\\[0,2cm]
By the choice of $\mathcal G$ there exists for each $\varphi\in W^*$ and every $m\in \mathcal G$ a unique solution $u_{m,\varphi}\in V$
to problem \eqref{2}, i.e., a unique $u_{m,\varphi}\in V$ such that
 \begin{align}\label{2a}
 \forall\,w\in W:\,a_1(u_{m,\varphi},w)+\lambda a_2^{(m)}(u_{m,\varphi},w)=\varphi(w).
 \end{align}
 We now fix $v_0\in V$ and we put $\varphi_m:=b^{(m)}(v_0,\,\cdot\,)\in W^*$ for $m\in \mathcal G$.
 In the following our main objective is to examine the properties of the mapping
 \begin{align} \label{superposition_principle_abstract}
 S:\mathcal G\rightarrow V;\, m\mapsto u_m:=u_{m,\varphi_m}+v_0.
 \end{align}
 As an immediate consequence of Theorem \ref{tcc for general S} and \ref{tcc for general S} we arrive at the subsequent result.

\vspace*{1ex} 
 
 \begin{theorem}\label{tcc for S}
 The function $S$ is continuously Fr\'echet-differentiable. Moreover, for each $m_0\in \mathcal G$ and every $\kappa\in(0,1)$ there exists a constant $\varrho=\varrho(m_0,\kappa)>0$ such that $B_\varrho(m_0)\subseteq \mathcal G$,
 the Fr\'echet-derivative $\mathrm D_{\mathcal F}S$ of $S$ is bounded on $B_\varrho(m_0)$ and $S$ satisfies on $B_\varrho(m_0)$ a
 $\kappa$-tangential cone condition w.r.t. both $\|\cdot\|_H$ and $\|\cdot\|_V$, i.e., we have
 \begin{align}\label{tcc}
 \|S(m_1)-S(m_2)-(\mathrm D_{\mathcal F}S(m_2))\lbrack m_1-m_2\rbrack\|_H\leq\kappa\|S(m_1)-S(m_2)\|_H
 \end{align}
 and
  \begin{align}\label{tcc in V}
 \|S(m_1)-S(m_2)-(\mathrm D_{\mathcal F}S(m_2))\lbrack m_1-m_2\rbrack\|_V\leq\kappa\|S(m_1)-S(m_2)\|_V
 \end{align}
 for all $m_1,m_2\in B_\varrho(m_0)$.
 \end{theorem}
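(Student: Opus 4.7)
The plan is to reduce Theorem \ref{tcc for S} to a direct invocation of Theorem \ref{tcc for general S}, combined with the final remark that a constant additive perturbation preserves the tangential cone condition. What must be done is essentially bookkeeping: verify that in the present special setting all structural hypotheses of Theorem \ref{tcc for general S} (strong well-posedness on $\mathcal G$; $\mathfrak{c}_t$ and $\Phi$ continuous affine linear on $X$) are met, and then track the additive $v_0$.

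First I would check that $\mathfrak{c}_t=\mathfrak{c}(t,\cdot)$ is the restriction to $\mathcal G$ of a continuous affine linear mapping defined on $X$. In the setting specified, $\mathfrak{c}(t,m)=\lambda\mathfrak{a}_2(m)$ and by \eqref{a2} this evaluates to $-\mathfrak{b}(m)+\lambda a_3$. Since $\mathfrak{b}\colon X\to\mathcal{S}(H\times W,\mathbb K)$ is continuous and linear by assumption and $\lambda a_3$ is a fixed element of $\mathcal{S}(H\times W,\mathbb K)$, the claim is immediate.

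Next I would verify that the data map $\Phi\colon m\mapsto\varphi_m=b^{(m)}(v_0,\,\cdot\,)$ extends to a continuous linear map on $X$. For fixed $v_0\in V$ the evaluation map $\mathrm{ev}_{v_0}\colon\mathcal{S}(H\times W,\mathbb K)\to W^*$, $\mathfrak{d}\mapsto\mathfrak{d}(v_0,\,\cdot\,)$, is continuous and linear (with norm bounded by $\gamma\|v_0\|_V$), and $\Phi=\mathrm{ev}_{v_0}\circ\mathfrak{b}$ is therefore continuous and linear on the whole of $X$. The strong well-posedness of problem \eqref{general 2} for every $m\in\mathcal G$ is inherited from the general assumption of this subsection: either $\{t\}\times\mathcal G\subseteq\mathcal U$ (from Theorem \ref{global version}) or $\{t\}\times\mathcal G\subseteq\mathcal O$ (from Theorem \ref{local version}) guarantees that $I_V+\lambda\mathcal{A}_{t,m}^V=I_V+\mathcal C_{t,m}^V$ is bijective, whence strong well-posedness by Proposition \ref{well-posedness}.

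With these three ingredients in place, Theorem \ref{tcc for general S} applies to the parameter-to-state operator $m\mapsto u_{m,\varphi_m}$, yielding continuous Fr\'echet differentiability on $\mathcal G$ together with the existence, for each $m_0\in\mathcal G$ and each $\kappa\in(0,1)$, of a radius $\varrho>0$ on which the Fr\'echet derivative is bounded and the $\kappa$-tangential cone condition holds in both $\|\cdot\|_H$ and $\|\cdot\|_V$. The final step is to note that $S(m)=u_{m,\varphi_m}+v_0$ differs from this operator only by a constant term in $V$, so its Fr\'echet derivative agrees with that of $m\mapsto u_{m,\varphi_m}$ and, in particular, the differences $S(m_1)-S(m_2)$ coincide with $u_{m_1,\varphi_{m_1}}-u_{m_2,\varphi_{m_2}}$; thus \eqref{tcc} and \eqref{tcc in V} transfer verbatim, as observed in the remark following Theorem \ref{tcc for general S}. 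There is no genuine obstacle here — the work is purely the matching of hypotheses, and the potentially delicate point is merely to notice that the linearity (not merely continuity) of $\mathfrak{b}$ is exactly what forces both $\mathfrak{c}_t$ and $\Phi$ into the affine linear class required by Theorem \ref{tcc for general S}.
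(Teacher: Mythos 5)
Your proposal is correct and follows exactly the route the paper intends: the paper states Theorem \ref{tcc for S} as an immediate consequence of Theorem \ref{tcc for general S} (together with the preceding remark on constant additive perturbations), and you have simply filled in the hypothesis-checking — that $\mathfrak{c}_t=-\mathfrak{b}+\lambda a_3$ and $\Phi=\mathrm{ev}_{v_0}\circ\mathfrak{b}$ are (affine) linear and continuous on $X$, that strong well-posedness on $\mathcal G$ follows from the subsection's standing assumption via Proposition \ref{well-posedness}, and that the shift by $v_0$ is harmless — which the paper leaves implicit.
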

 
\vspace*{1ex}

\begin{remark}
 If we have uniqueness of a weak solution of the boundary value problem \eqref{hhg}, \eqref{rbc}, the above results directly yield the well-definedness of the respective parameter-to-state map $S$, its Fr\'echet-differentiability and the local validity of the tangential cone condition. The definition \eqref{superposition_principle_abstract} reflects the superposition principle, i.e., the function $v_0$ corresponds to the incident field $\ui$.
\end{remark}

 \section{Conclusion and outlook}
We have introduced an abstract, functional analytic framework based on form methods that seems to be
suited to the analysis of parameter identification problems arising from certain parameter-dependent, elliptic boundary value problems in divergence form,
which encompass equations that are of particular interest in the area of parameter identification, most notably the inverse medium problem and the inverse scattering problem of THz tomography.

Our main focus was on the question of the well-definedness and the analytic properties of the corresponding parameter-to-state operators.
The first and crucial step consisted in an operator theoretic reformulation of certain abstract variational problems, which provided an easy account to (global and local) well-posedness results, hence, to well-definedness results for the parameter-to-state operator.
In addition, it was this operator theoretic reformulation that allowed us to study the analytic properties of the parameter-to-state operator and to show that, under appropriate and reasonable conditions, this operator is Fr\'echet-differentiable, smooth, analytic, or fulfils are very strong version of the so-called tangential cone condition, which is often postulated for numerical solution techniques, but hard to verify.
In particular, our approach allows an insight into how the mathematical properties of the relevant inclusions, norms etc.~ influence the constant $\kappa$ that appears in the tangential cone condition. 
This is useful information when one chooses regularisation methods like, for instance, sequential subspace optimisation techniques, where $\kappa$ influences the algorithm.

In a follow-up paper, we apply our abstract results to a broad range of elliptic boundary value problems in divergence form with Dirichlet, Neumann, Robin, or mixed boundary conditions, including real world problems such as the inverse problem of THz tomography, thereby giving a far-reaching extension of previous results due to Bao and Li \cite{gbpl}.

 \bibliographystyle{siam}
\bibliography{bibliography_abstract_framework}

\pagestyle{myheadings}
\thispagestyle{plain}

\end{document}